\theoremstyle{plain}
\newtheorem{thm}{Theorem}[section]
\newtheorem*{thm*}{Theorem}
\newtheorem*{cor*}{Corollary}
\newtheorem{prop}[thm]{Proposition}
\newtheorem{lem}[thm]{Lemma}
\newtheorem{cor}[thm]{Corollary}
\newtheorem*{claim*}{Claim}
\theoremstyle{definition}
\newtheorem{defn}[thm]{Definition}
\newtheorem{ex}[thm]{Example}
\newtheorem{ques}[thm]{Question}
\newtheorem{setting}[thm]{Setting}
\theoremstyle{remark}
\newtheorem{rem}[thm]{Remark}
\numberwithin{equation}{thm}
\newtheorem*{ac}{Acknowledgments}
\newtheorem*{sdc}{Statements and Declarations}
\def\Ext{\operatorname{Ext}}
\def\Im{\operatorname{Im}}
\def\Hom{\operatorname{Hom}}
\def\Max{\operatorname{Max}}
\def\End{\mathrm{End}}
\def\mod{\mathrm{mod}}
\def\Coker{\mathrm{Coker}}
\def\e{\mathrm{e}}
\def\m{\mathfrak m}
\def\p{\mathfrak p}
\newcommand{\rma}{\mathrm{a}}
\newcommand{\rmc}{\mathrm{c}}
\newcommand{\rme}{\mathrm{e}}
\newcommand{\rmf}{\mathrm{f}}
\newcommand{\rmr}{\mathrm{r}}
\newcommand{\rmJ}{\mathrm{J}}
\newcommand{\rmK}{\mathrm{K}}
\newcommand{\rmQ}{\mathrm{Q}}
\newcommand{\calF}{\mathcal{F}}
\newcommand{\calP}{\mathcal{P}}
\newcommand{\calW}{\mathcal{W}}
\newcommand{\calX}{\mathcal{X}}
\newcommand{\calY}{\mathcal{Y}}
\newcommand{\calZ}{\mathcal{Z}}
\newcommand{\fka}{\mathfrak{a}}
\newcommand{\fkc}{\mathfrak{c}}
\newcommand{\fkp}{\mathfrak{p}}
\newcommand{\mapright}[1]{%
\smash{\mathop{%
\hbox to 1cm{\rightarrowfill}}\limits^{#1}}}
\newcommand{\mapleft}[1]{%
\smash{\mathop{%
\hbox to 1cm{\leftarrowfill}}\limits_{#1}}}
\def\depth{\operatorname{depth}}
\def\Ass{\operatorname{Ass}}
\def\Spec{\operatorname{Spec}}
\def\tr{\operatorname{tr}}
\def\Syz{\mathrm{Syz}}
\def\OCM{\operatorname{\rm \Omega \rm{CM}}}
\def\CM{\operatorname{\rm{CM}}}
\def\Ref{\operatorname{\rm{Ref}}}
\def\indCM{\operatorname{\rm{ind\hspace{0.1em} CM}}}
\def\indOCM{\operatorname{\rm{ind\hspace{0.1em}\Omega CM}}}
\def\ind{\operatorname{\rm{ind}}}
\title[Reflexive modules over the rings $I:I$]{Reflexive modules over the endomorphism algebras of reflexive trace ideals}
\author[Naoki Endo]{Naoki Endo}
\address{School of Political Science and Economics, Meiji University, 1-9-1 Eifuku, Suginami-ku, Tokyo 168-8555, Japan}
\email{endo@meiji.ac.jp}
\urladdr{https://www.isc.meiji.ac.jp/~endo/}
\author[Shiro Goto]{Shiro Goto}
\address{Department of Mathematics, School of Science and Technology, Meiji University, 1-1-1 Higashi-mita, Tama-ku, Kawasaki 214-8571, Japan}
\email{shirogoto@gmail.com}
\thanks{2020 {\em Mathematics Subject Classification.} 13H10, 13A15, 13C14.}
\thanks{{\em Key words and phrases.} reflexive module, torsionfree module, endomorphism algebra of an ideal}
\thanks{The first author was partially supported by JSPS Grant-in-Aid for Young Scientists 20K14299. The second author was partially supported by JSPS Grant-in-Aid for Scientific Research (C) 21K03211. }
\thanks{The second author passed away on July 26, 2022.}
\begin{document}

\maketitle

\setlength{\baselineskip} {14.9pt}

\begin{abstract}
In the present paper we investigate reflexive modules over the endomorphism algebras of reflexive trace ideals in a one-dimensional Cohen-Macaulay local ring. The main theorem generalizes both of the results of S. Goto, N. Matsuoka, and T. T. Phuong (\cite[Theorem 5.1]{GMP}) and T. Kobayashi (\cite[Theorem 1.3]{K}) concerning the endomorphism algebra of its maximal ideal. We also explore the question of when the category of reflexive modules is of finite type, i.e., the base ring has only finitely many isomorphism classes of indecomposable reflexive modules. 
We show that, if the category is of finite type, the ring is analytically unramified and has only finitely many Ulrich ideals. As a consequence, there are only finitely many Ulrich ideals are contained in Arf local rings once the normalization is a local ring.
\end{abstract}

%{\footnotesize \tableofcontents}

%%%%%%%%%%%%%%%%%%%%%%%%%%%%%%%%%%%%%%%%%%%%%%%%%%%%%%%%%%%%%%%%%%%%%%%%%%%%%%%%%%%%%%%%%%%%%%%%%%%%%%%%%%%%%%%%%%%%%%%%%%%%%%%%%%%%%%%%%%%%%%%%

\section{Introduction}\label{intro}

This paper aims at, in one-dimensional Cohen-Macaulay local rings,  investigating the category of reflexive modules over the endomorphism algebras of reflexive trace ideals.  

Let $R$ be a commutative Noetherian ring. For an $R$-module $M$, we have a canonical homomorphism 
$
\varphi: M \to M^{**}
$ 
defined by ${\big[ \varphi(x) \big](f) = f(x) }$ for each ${f \in M^{*} }$ and $x \in M$, where $(-)^{*} =\Hom_{R}(-, R)$ denotes the $R$-dual functor. Following the terminology of H. Bass \cite[page 476]{Bass}, we say that $M$ is {\it reflexive} if $\varphi$ is bijective, {\it torsionless} if $\varphi$ is injective.
%The first systematical treatment of reflexive modules over commutative rings was done in \cite{Bourbaki}. %N. Bourbaki, Diviseurs, in “Alg`ebre Commutative”. Chap. VII. Hermann, Paris, 1965. 
Torsionless modules are {\it torsionfree}, i.e., there is no nonzero torsion elements, and the converse holds if the total ring of fractions $\rmQ(R)$ of $R$ is Gorenstein (\cite[Theorem (A.1)]{V1}); equivalently, the local ring $R_{\p}$ is Gorenstein for every $\p \in \Ass R$. 
Simplest examples of reflexive modules are finite free modules. %In particular, every finite dimensional vector space over fields, which plays an important role in linear algebra, is reflexive.   
The notion of reflexivity of modules, in general, appears not only commutative algebra but in diverse branches of mathematics, and it often plays a crucial role in many situations. %such as finite dimensional vector spaces over fields exert a beneficial effect in linear algebra.
%behave well in linear algebra. 
Among them, our interest is what the relative size of the category of reflexive, as well as torsionfree, modules reflects the singularities of base rings. A one-dimensional Cohen-Macaulay local ring is Gorenstein if and only if all finite reflexive modules are torsionfree; the regularity of the ring is the same as having all finite torsionfree modules are free. 
%When $R$ is %Artin環の時に, reflexive と f.g. moduleは同じ
In addition, provided that the Noetherian ring $R$ satisfies Serre's $(S_2)$ condition and $\rmQ(R)$ is Gorenstien, a finitely generated $R$-module $M$ is reflexive if and only if $M_{\p}$ is reflexive as an $R_{\p}$-module for every $\p \in \Spec R$ with $\dim R_{\p} = 1$ and $M$ satisfies $(S_2)$, where a finitely generated $R$-module $N$ satisfies {\it Serre's $(S_2)$ condition} if $\depth_{R_{\p}}N_\p \ge \inf \{2, \dim R_{\p}\}$ for every $\p \in \Spec R$. 
This observation suggests us that the importance of analysis in the one-dimensional case for reflexive modules.

%The investigation of weakly Arf rings opens the door leading to the frontier of this new topic and gives us new knowledge of Arf rings.

To explain our aim and motivation more precisely, let $(R, \m)$ be a Cohen-Macaulay local ring with $\dim R=1$ satisfying $(S_2)$. We further assume $\rmQ(R)$ is a Gorenstein ring. Note that this assumptions is automatically satisfied if $R$ is reduced. 
 Let $A$ be a {\it birational module-finite extension} of $R$, namely, $A$ is an intermediate ring between $R$ and $\rmQ(R)$ which is finitely generated as an $R$-module. A finitely generated $A$-module $M$ is called  {\it maximal Cohen-Macaulay} (abbr. MCM) if $\depth_{A_\p}M_\p \ge \dim A_{\p}$ holds for every $\p \in \Spec A$. As $\dim A=1$, MCM modules coincide with torsionfree. 
We define the categories below which we will frequently use throughout this paper: 
\begin{itemize}
%\item $\Mod\, A$ the category of (arbitrary) $A$-modules
\item $\mod\, A$ the category of finitely generated $A$-modules,
\item $\CM(A)$ the full subcategory of $\mod\, A$ consisting of MCM $A$-modules, and
\item $\Ref(A)$ the full subcategory of $\mod\, A$ consisting of reflexive $A$-modules. 
\end{itemize}
Then $\Ref(A) \subseteq \CM(A)$. Since reflexive modules $M \in \mod\, A$ appear in the exact sequence $0 \to M \to F_1 \to F_2$ with finite free modules $F_1$ and $F_2$, we have $\Ref(A) = \OCM(A)$, the full subcategory of $\mod\, A$ consisting of first syzygies of MCM $A$-modules. This means $\OCM(A)$ consists of all $M \in \mod\, A$ such that there exists an exact sequence $0 \to M \to F \to X \to 0$ of $A$-modules, where $F$ is a free $A$-module and $X \in \CM(A)$. 
One can show that $A$ is a Gorenstein ring if and only if the equality $\OCM(A) = \CM(A)$ holds. 

The present research traces back to a classical question of when the endomorphism algebra $E=\End_R(\m)\, (\cong \m:\m)$, which forms a commutative ring, of $\m$ is a Gorenstein ring. This question is originally advocated by V. Barucci and R. Fr\"{o}berg (\cite[Proposition 25]{BF}) in the case where the ring $R$ is  {\it analytically unramified}, i.e., the $\m$-adic completion $\widehat{R}$ is reduced, and then S. Goto, N. Matsuoka, and T. T. Phoung settled the above question with full generality. They proved in  \cite[Theorem 5.1]{GMP}, for an arbitrary Cohen-Macaulay local ring $R$ and of dimension one,  that $E=\End_R(\m)$ is Gorenstein, that is, $\OCM(E) = \CM(E)$, if and only if $R$ is {\it almost Gorenstein} in the sense of \cite{GMP} and its maximal ideal $\m$ is stable, i.e., $\m^2 = a\m$ for some $a \in \m$.  
Later, in \cite{K}, T. Kobayashi introduced the full subcategory $\OCM_{\calP}(R)$ of $\mod\, R$ consisting of MCM $R$-modules without free summands, and proved that one has $\OCM(E) \subseteq \OCM_{\calP}(R) \subseteq \CM(E)$ as subcategories of $\mod\, R$, while the equality $\OCM(E) = \OCM_{\calP}(R)$ holds if and only if $\m$ is stable; the equality $\OCM_{\calP}(R) = \CM(E)$ holds if and only if $R$ is an almost Gorenstein ring.

% $R$ is an \textcolor{blue}{almost Gorenstein ring} if $\m K \subseteq R$, where $R \subseteq   K \subseteq \overline{R}$ s.t. $K \cong \rmK_R$.

%Let $\OCM_{\calP}(R) =\{M \in \OCM(R) \mid \textcolor{red}{M~\text{doesn't have free summands}}\}$. 
%DVRなら無条件に全て正しいので, DVRでないと仮定しても良いだろう。

While tracking the development of almost Gorenstein rings, the authors often encounter non-almost Gorenstein rings which are still have good properties; see e.g., \cite{CCGT, CGKM, EGI, GIT, GK}. Thus the theory of reflexive modules might develop more in relation to the singularities of rings and one still needs to pursue the works \cite{BF, GMP, K} in the literature. In this direction we consider the following question.
%In this paper we consider the following question. 

\begin{ques}
What happens if we take the birational module-finite extension $A$ to be the endomorphism algebra $\End_R(I)$ of an (not nesessarily maximal) ideal $I$ of $R$?
\end{ques}

%, let us investigate the question of what happens if we choose the birational extension $A$ to be the endomorphism algebra $\End_R(I)$ of a reflexive trace ideal $I$ of $R$. 

%
%The motivation of this generalization comes from various sources. One is to prove for Rees algebras of modules, many of the results have been obtained also in the case of ideals, including estimates on analytic spreads and reduction numbers. As it turns out, this is not just routine generalization, but it requires a good deal of technical development. Another motivation comes from the fact that Rees algebras of modules include the multi Rees rings, which correspond to the case where the module is a direct sum of ideals.

The category $\OCM(A)$, which can be defined for higher dimensional rings $A$, has been deeply connected to the theory of special Cohen-Macaulay modules in the sense of \cite{Wunram}. If $A$ is a two-dimensional complete local normal domain, all MCM $A$-modules are exactly reflexive, and for $M \in \CM(A)$, the $A$-module $M$ is special Cohen-Macaulay if and only if $M^* \in \OCM(A)$; see \cite[Theorem 2.7]{IW}. In addition, it is proved in \cite[Corollary 3.3]{DITV} that, for a two-dimensional excellent henselian local normal domain $A$ with algebraically closed field, $\OCM(A)$ is of finite type if and only if the ring $A$ has a rational singularity; equivalently, $A$ has a non-commutative resolution. Here, a full subcategory $\calX$ of $\mod\, A$ is called {\it of finite type} if $\calX = {\rm add}_AM$ for some $M \in \mod\, A$, where ${\rm add}_AM$ is the full subcategory of $\mod\, A$ consisting of direct summands of finite direct sums of copies of $M$. We then naturally attain the following.

%For a Cohen-Macaulay isolated singularity $A$ of dimension $d$, we have $\CM(A) = \calF_d$ and $\OCM(A) = \calF_{d+1}$, where $\calF_n$ denotes the category of $n$-torsionfree $A$-modules; see \cite[Proposition 2.4]{IW}.

\begin{ques}
When is $\OCM(R)$ of finite type for a one-dimensional ring $R$?
\end{ques}

By \cite[Corollary 1.4]{K}, provided that $R$ is an almost Gorenstein ring, the category $\OCM(R)$ is of finite type if and only if so is $\CM(\End_R(\m))$. However, it remains unclear what happens if we do not assume the ring is almost Gorenstein, which we will clarify in the present paper.

We now state our results explaining how this paper is organized. In Section \ref{sec2}, we summarize some basic properties of the algebra $R^I = \bigcup_{n\geq 0}\left[I^n: I^n\right]$ of $R$ at an ideal $I$ and trace ideals, which play an important role in our argument.
%Over a one-dimensional Cohen-Macaulay local ring, the maximal ideal is a trace ideal if the ring is singular. %This fact leads us to consider trace ideals.
Section \ref{sec3} is devoted to prove Theorem \ref{main}, a main result of this paper. This provides a complete generalization of \cite[Theorem 5.1]{GMP} and \cite[Theorem 1.3]{K} as well.
In Section \ref{sec4}, we explore the question of when the category $\OCM(R)$ is of finite type. 
Suppose that $R$ admits the canonical module $\rmK_R$ and the existence of a {\it fractional canonical ideal} $K$ of $R$, i.e., $K$ is an $R$-submodule of $\rmQ(R)$ such that $R \subseteq K \subseteq \overline{R}$ and $K \cong \rmK_R$, where $\overline{R}$ denotes the integral closure of $R$ in $\rmQ(R)$.
Theorem \ref{4.2} claims that, if $R$ is a {\it generalized Gorenstein ring}, defined in \cite{GK}, with minimal multiplicity, then $\OCM(R)$ is of finite type if and only if so is $\CM(R[K])$. We explore the examples as well.
%Examples are also explored. 
In Section \ref{sec5}, we study the relation between the finite typeness of $\OCM(R)$ and the set of Ulrich ideals, a special kind of $\m$-primary ideals in $R$.
 Although we restrict to one-dimensional rings, Theorem \ref{4.10} gives a generalization of \cite[Theorem 7.8]{GOTWY}. In Section \ref{sec6}, we furthermore extend the classical result that every one-dimensional Cohen-Macaulay local ring of finite CM-representation type is analytically unramified. Theorem \ref{5.4} states the ring $R$ is analytically unramified if $\OCM(R)$ is of finite type. 
The converse holds if $R$ is an Arf ring and the normalization $\overline{R}$ is a local ring, which is also pointed out by the recent papers \cite{D, DL, IK}. As a consequence, we show that there are only finitely many Ulrich ideals are contained in Arf local rings once the normalization is a local ring.

%End環上のreflexive moduleはnon-commutative settingでも解析されているし, NCCRとの関連がある。
%End環についてのコメントを書くか。それよりも先にCMとOCMの定義を書くかな。CM表現論において重要なども。
%その後で可換環論の視点から見ると, BFによるGorenstein性の話があり, GMPがある。小林の話や2-AGLについて言及。最近では, DaoやDao-Lindo, 磯部-神代の話がある。

%\textcolor{red}{Daoの論文を確認。CMやCMのstable categoryの次元, イデアル化でのOCMの計算を行う。また, Arf環のOCMは？localでない場合にも正しい？}
%\textcolor{red}{$\OCM$は, 高次元で考えることも可能であるし, $\OCM$に関しては2次元の優れた結果がある（Takahashi, Iyama）。rational singularityと同値。}

\if0

$R^I$の定義

$\mu_R(-)$の定義, $v(R)$の定義, $\e(R)$の定義

main theoremの説明とそこから, AGLの結果の拡張になっていることを述べる。このときに, AGLの定義を述べておく。一般次元かな？あるいは1次元でかな。

この論文では, End環$\End_R(I)$の$\OCM$を考える。End環は可換環になる。

In this section we give the definitions and some basic properties of the Arf and weakly Arf rings. For an arbitrary commutative ring $A$, let $W(A)$ be the set of non-zerodivisors on $A$. We denote by $\calF_A$ the set of ideals in $A$ which contain a non-zerodivisor on $A$.

The class of generalized Gorenstein rings is a new class of Cohen-Macaulay rings, which naturally covers the class of Gorenstein rings and fills the gap in-between Cohen-Macaulay and Gorenstein properties; see \cite{CGKM, GGHV, GK, GK2, GMP, GMTY1, GMTY2, GMTY3, GMTY4, GRTT, GTT, GTT2, GT, T}.  
In fact such rings extend the definition of almost Gorenstein rings which were initially defined by Barucci and Fr\"oberg \cite{BF} over one-dimensional analytically unramified local rings, and further developed  and defined by Goto, Matsuoka, and Phuong \cite{GMP} over arbitrary Cohen-Macaulay local rings of dimension one. 
%%%%%%%%%
\fi

\section{Preliminaries}\label{sec2}

Let $R$ be an arbitrary commutative ring. An ideal $I$ of $R$ is called {\it regular} if it contains a non-zerodivisor on $R$. We denote by $\rmQ(R)$ the total ring of fractions of $R$.
For $R$-submodules $X$ and $Y$ of $\rmQ(R)$, let $X:Y = \{a \in \rmQ(R) \mid aY \subseteq X\}$. If we consider ideals $I, J$ of $R$, we set $I:_RJ =\{a \in R \mid aJ \subseteq I\}$; hence $I:_RJ = (I:J) \cap R$.
A {\it fractional ideal} $I$ of $R$ is a finitely generated $R$-submodule of $\rmQ(R)$ satisfying $\rmQ(R)\cdot I = \rmQ(R)$. For fractional ideals $I$ and $J$, we have a natural identification $I:J \cong \Hom_R(J, I)$. In particular, the endomorphism algebra $\End_R(I)\cong I:I$ of a fractional ideal $I$ forms a commutative and birational module-finite extension of $R$. 
Note that a fractional ideal $M$ of $R$ is reflexive if and only if the equality $R:(R:M) = M$ holds (\cite[Proposition 2.4]{KT}).

%$I, J \cap W(R) \ne \emptyset$のとき, $I:J$と$\Hom_R(J, I)$を同一視する。We identify $I$ with $\Hom_R(A, R)$.  %$I:J$と$\Hom(J, I)$を同一視する。

For each regular ideal $I$ of $R$, there is a filtration of endomorphism algebras as follows:
$$
R \subseteq I:I \subseteq I^2: I^2 \subseteq \cdots \subseteq I^n:I^n \subseteq \cdots \subseteq\rmQ(R).
$$
Set $R^I = \bigcup_{n\geq 0}\left[I^n: I^n\right]$. The ring $R^I$ is a birational extension of $R$, and it coincides with the blow-up of $R$ at $I$ when $R$ is Noetherian and of dimension one.  For each $n > 0$, the ideal $I^n$ is regular and $R^I = R^{I^n}$. Let $\overline{R}$ denote the integral closure of $R$ in $\rmQ(R)$. Hence, for a Cohen-Macaulay local ring $R$ of dimension one, all the blow-ups $R^I$ of $R$ at regular ideals $I$ are finitely generated $R$-modules and $R \subseteq R^I \subseteq \overline{R}$, because there exists an integer $n >0$ such that $I^n$ contains a principal reduction; see \cite[Proof of Proposition 1.1]{Lipman}.
%If $R$ is Noetherian and $I$ contains a principal reduction, $R^I$ is a finitely generated $R$-module  and $R \subseteq R^I \subseteq \overline{R}$, where 
Note that, if $a \in I$ is a reduction of $I$, i.e., $I^{r+1} = a I^r$ for some $r\ge0$, then one has
$$
R^I = R\left[\frac{I}{a}\right] = \frac{I^r}{a^r}, \ \  \text{ where } \ \frac{I}{a} = \left\{\frac{x}{a} ~\middle|~ x \in I\right\} \subseteq \rmQ(R)
$$
while $R^I = I^n:I^n$ for every $n \ge r$. A regular ideal $I$ of $R$ is called {\it stable} if $R^I=I:I$, or equivalently, $I^2 = aI$ for some $a \in I$; see \cite[Lemma 1.8]{Lipman}. So, the ring $R^I$ coincides with $\End_R(I)$ for stable ideals $I$. 
When $R$ is a one-dimensional Cohen-Macaulay local ring, the stability of its maximal ideal $\m$ is equivalent to the ring has {\it minimal multiplicity}, i.e., the Hilbert-Samuel multiplicity is equal to the embedding dimension.

%%%%%%%%

We recall the definition and basic properties of trace ideals. 
%We first recall the notion of trace modules and trace ideals. 
Let $M, X$ be $R$-modules and consider the homomorphism of $R$-modules
$$
\tau: \Hom_R(M,X) \otimes_RM \to X
$$ 
defined by $\tau(f \otimes m)=f(m)$ for each $f \in \Hom_R(M,X)$ and $m \in M$. Set $\tr_X(M)= \Im \tau$, which forms an $R$-submodule of $X$, and call it {\it the trace module of $M$ in $X$}.  
%and a given $R$-submodule $Y$ of $X$ is called a {\em trace~submodule} in $X$, if $Y = \operatorname{Tr}_X(M)$ for some $R$-module $M$. 
%The following result is due to H. Lindo \cite{L2}.

%We begin with the following. 

\begin{prop}[{\cite[Lemma 2.3]{L2}, \cite[Corollary 2.2]{GIK}}]\label{1.10}
Let $I$ be an ideal of $R$. Then the following conditions are equivalent.
\begin{enumerate}[$(1)$]
\item $I$ is a {\it trace ideal} of $R$, that is $I = \tr_R(M)$ for some $R$-module $M$.
\item $I = \tr_R(I)$.
\item For each homomorphism $f : I \to R$ of $R$-modules, there is an endomorphism $g : I \to I$ such that $f = \iota{\cdot}g$, where $\iota : I \to R$ denotes the embedding.
\end{enumerate}
When $I$ is a regular ideal of $R$, one can add the following.
\begin{enumerate}[$(1)$]
\item[$(4)$] $I:I = R:I$. 
\end{enumerate}
\end{prop}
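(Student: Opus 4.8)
The plan is to prove the equivalence of (1)--(3) first in general, and then add (4) under the regularity hypothesis. For (1) $\Rightarrow$ (2): if $I = \tr_R(M)$, then I would show $\tr_R(I) = \tr_R(\tr_R(M)) = \tr_R(M) = I$; the key observation is the idempotence property of trace, namely that $\tr_R(\tr_R(M))$ and $\tr_R(M)$ coincide because any $R$-linear map $I \to R$ restricts from a map on $M$ via the surjection (using that every element of $\tr_R(M)$ is a finite sum $\sum f_i(m_i)$ and composing appropriately). The implication (2) $\Rightarrow$ (1) is immediate by taking $M = I$. For (2) $\Leftrightarrow$ (3): note that $\tr_R(I) = \sum_{f \in \Hom_R(I,R)} f(I)$, so $\tr_R(I) \subseteq I$ (i.e. $I = \tr_R(I)$ since the reverse inclusion is automatic from $\iota \in \Hom_R(I,R)$) is equivalent to saying $f(I) \subseteq I$ for every $f : I \to R$, which is precisely the statement that each such $f$ factors as $\iota \cdot g$ for an endomorphism $g$ of $I$.

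For the regular case, assume $I$ contains a non-zerodivisor. First I would establish the identification $\Hom_R(I,R) \cong R : I$ inside $\rmQ(R)$: since $I$ is regular, any $f : I \to R$ is given by multiplication by a unique element $q = f(x)/x \in \rmQ(R)$ (independent of the non-zerodivisor $x \in I$ by the usual cross-multiplication argument), and $qI \subseteq R$ means exactly $q \in R : I$. Under this identification, the composite $\Hom_R(I,R) \otimes_R I \to R$ sending $f \otimes m \mapsto f(m)$ becomes the multiplication map $(R:I) \otimes_R I \to R$ with image $(R:I) \cdot I$, so $\tr_R(I) = (R:I) I$. Similarly $\End_R(I) \cong I : I$ via multiplication. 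Now (3) $\Leftrightarrow$ (4): condition (3) says every $q \in R : I$ satisfies $qI \subseteq I$, i.e. $R : I \subseteq I : I$; the reverse inclusion $I : I \subseteq R : I$ always holds since $1 \in I : I$ forces... actually more directly, $q \in I : I$ gives $qI \subseteq I \subseteq R$ only if... I would instead argue: $I : I \subseteq R : I$ because for $q \in I:I$ and picking a non-zerodivisor $x \in I$, iterating gives $q^n x \in I$ for all $n$, so $q$ is integral over $R$, hence $q \in \overline{R}$; but to land in $R : I$ I just need $qI \subseteq R$, and indeed $qI \subseteq I \subseteq R$ directly since $I$ is an ideal of $R$. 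Wait — $I \subseteq R$ only as a fractional ideal contained in $R$; here $I$ is an honest ideal, so yes $qI \subseteq I \subseteq R$, giving $I : I \subseteq R : I$ trivially. Therefore (4) $R:I = I:I$ is equivalent to $R : I \subseteq I : I$, which by the translation is exactly (3).

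The main obstacle I anticipate is bookkeeping the identifications carefully enough: making sure that $\Hom_R(I,R)$ really is realized as $R:I$ inside $\rmQ(R)$ (this uses regularity in an essential way and the fact that a homomorphism out of a regular ideal is determined by its value on a single non-zerodivisor), and that under this identification the trace map $\tau$ becomes honest multiplication in $\rmQ(R)$ so that $\tr_R(I) = (R:I)\cdot I$. Everything else is formal once these translations are in place. I would also remark that in the regular case, (2) $I = \tr_R(I) = (R:I) I$ combined with (4) gives the clean statement that $I$ is a trace ideal iff $(I:I) I = I$, i.e. iff $I$ is an ideal of its own endomorphism ring, which is the form most useful in the sequel.

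\begin{proof}
$(1)\Rightarrow(2)$: Suppose $I=\tr_R(M)$ for some $R$-module $M$. Clearly $I=\sum_{f\in\Hom_R(M,R)}f(M)$. Since $\iota\in\Hom_R(I,R)$, we have $I\subseteq\tr_R(I)$. Conversely, given $g\in\Hom_R(I,R)$, the composite $g\circ f:M\to R$ lies in $\Hom_R(M,R)$ for each $f\in\Hom_R(M,R)$, so $g(f(M))\subseteq I$; as $I$ is generated by the submodules $f(M)$, it follows that $g(I)\subseteq I$. Hence $\tr_R(I)\subseteq I$, and $I=\tr_R(I)$.

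$(2)\Rightarrow(1)$: Take $M=I$.

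$(2)\Leftrightarrow(3)$: We always have $I\subseteq\tr_R(I)$ via $\iota$. Thus $I=\tr_R(I)$ holds if and only if $g(I)\subseteq I$ for every $g\in\Hom_R(I,R)$, that is, if and only if every such $g$ factors as $g=\iota\cdot h$ for some $h\in\End_R(I)$. This is precisely condition $(3)$.

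Now assume $I$ is regular and fix a non-zerodivisor $x\in I$. For $f\in\Hom_R(I,R)$ and any non-zerodivisor $y\in I$ we have $y\cdot f(x)=f(xy)=x\cdot f(y)$, so $q_f:=f(x)/x=f(y)/y\in\rmQ(R)$ does not depend on the chosen non-zerodivisor, and $f$ is multiplication by $q_f$; since $f(I)\subseteq R$, we get $q_f\in R:I$. Conversely every $q\in R:I$ yields such a homomorphism. This gives an isomorphism $\Hom_R(I,R)\cong R:I$ under which the trace map $\tau$ becomes the multiplication map $(R:I)\otimes_R I\to\rmQ(R)$, hence $\tr_R(I)=(R:I)I$. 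Likewise $\End_R(I)\cong I:I$ via multiplication.

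$(3)\Leftrightarrow(4)$: Since $I$ is an ideal of $R$, for $q\in I:I$ we have $qI\subseteq I\subseteq R$, so $I:I\subseteq R:I$ always. Under the identification above, condition $(3)$ says that every $q\in R:I$ satisfies $qI\subseteq I$, i.e. $R:I\subseteq I:I$. Combining the two inclusions, $(3)$ holds if and only if $R:I=I:I$, which is $(4)$.
\end{proof}
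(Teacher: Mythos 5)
Your proof is correct. The paper gives no argument of its own for Proposition \ref{1.10}, deferring to \cite[Lemma 2.3]{L2} and \cite[Corollary 2.2]{GIK}; your route --- idempotence of the trace for $(1)\Leftrightarrow(2)\Leftrightarrow(3)$, and the identification $\Hom_R(I,R)\cong R:I$ (valid because $I$ is regular, so a homomorphism is determined by its value on one non-zerodivisor) to translate $(3)$ into the inclusion $R:I\subseteq I:I$ and hence into $(4)$ via the automatic inclusion $I:I\subseteq R:I$ --- is exactly the standard argument those references use.
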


Examples of trace ideals are abundant; see e.g., \cite{DL, DMS, Dey, EGIM, GIK, HHS, HR, IK, KT, L, L2, LP}. If $R$ is a one-dimensional Cohen-Macaulay local ring such that $\rmQ(R)$ is Gorenstein, there is a one-to-one correspondence below:
\begin{eqnarray*}
\{\text{regular reflexive trace ideals}\}  &\longleftrightarrow& \{\text{reflexive birational module-finite extensions}\} \\
I &\mapsto& \End_R(I) = I:I \\
R:A &\mapsfrom& A.
\end{eqnarray*}
In addition, by \cite[Lemma 3.15]{GMP}, the maximal ideal $\m$ is a regular reflexive trace ideal, unless $R$ is a DVR. In one-dimensional Cohen-Macaulay local rings, there are two important classes of regular ideals, Ulrich ideals (\cite{GOTWY}) and good ideals (\cite{GIW}), and the relation between them can be described as follows; see \cite{D, DL, GOTWY}. 
%1次元CM環において, 重要なイデアルのクラスとして, Ulrich idealsとgood idealsがあり, それらの間には次のような関係がある。
$$
\xymatrix{
 \text{Ulrich ideals} \  \ar@{=>}[r] &\  \text{good ideals  $=$  regular stable trace ideals} \ar@{=>}[d]   \ \ar@{=>}[r] &   \ \text{regular trace ideals}
\\ 
  & \text{regular reflexive ideals}&    & 
}
$$
\begin{rem}
Regular trace ideals in {\it Arf rings}, defined by J. Lipman (\cite{Lipman}), are integrally closed (\cite[Theorem 7.4]{DL}, \cite[Proposition 3.1]{IK}), so they are stable, good, and reflexive. However, regular trace ideals are not necessarily reflexive in general; see e.g., \cite[Example 7.12]{DMS}, \cite[Example 3.11]{HR}. 
Moreover, if a one-dimensional Cohen-Macaulay ring does not have minimal multiplicity, the maximal ideal is not stable trace ideal even though it is a reflexive trace ideal. 
\end{rem}
%$R$がArf環のとき, regular trace idealsはintegrally closed idealsである。よって, stableである。つまり, regular trace idealsはstableである (IK, Prop 3.1, DL?)。従って, good idealsであり, reflexiveでもある。%しかし, regular trace idealsはreflexiveとは限らない (Dao-Maitra-Sridhar, Ex 7.12)。
%また, $R$がminimal multiplicityでないとき, $\m$はstable trace ideals ではないが, $\m$はreflexive trace idealsになっている。
%\item $\indOCM(R)$がfiniteでも, $R$がArfとは限らない（Dao, Ex 4.4）。
%\end{itemize}}

Let $A$ be a birational extension of $R$, so $A$ is an intermediate ring between $R$ and $\rmQ(R)$. 
We now summarize some auxiliary results which we later need
throughout this paper. The following might be known, but let us include a brief proof for the sake of completeness (cf. {\cite[Proposition 4.14]{LW}}). 

\begin{lem}\label{2.2}
Let $X, Y$ be $A$-modules. Then the following assertions hold true. 
\begin{enumerate}[$(1)$]
\item  If $Y$ is torsionfree as an $R$-module, then $\Hom_R(X, Y) = \Hom_A(X, Y)$.   
\item The $A$-module $X$ is torsionfree if and only if $X$ is torsionfree as an $R$-module. 
\item Suppose that $X$ is torsionfree as an $R$-module and $X \ne (0)$. Then $X$ is indecomposable as an $A$-module if and only if $X$ is indecomposable as an $R$-module.
\end{enumerate}
\end{lem}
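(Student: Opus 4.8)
The plan is to prove the three assertions in order, since each feeds into the next. Throughout, recall that $A$ is a birational extension of $R$, so $A$ is a subring of $\rmQ(R)$ and every $A$-module is canonically an $R$-module, with the $A$-action extending the $R$-action. The key ring-theoretic fact I will use repeatedly is that every element of $A$ is a fraction $b/c$ with $b, c \in R$ and $c$ a non-zerodivisor on $R$.

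For (1): the inclusion $\Hom_A(X,Y) \subseteq \Hom_R(X,Y)$ is automatic. For the reverse, take $f \in \Hom_R(X,Y)$ and $a \in A$; I must show $f(ax) = af(x)$ for all $x \in X$. Write $a = b/c$ with $b,c \in R$, $c$ a non-zerodivisor. Then $c\,(ax) = bx$ in $X$, so applying $f$ and $R$-linearity gives $c\,f(ax) = f(cax) = f(bx) = b\,f(x) = c\,(a f(x))$, hence $c\bigl(f(ax) - a f(x)\bigr) = 0$. Since $f(ax) - af(x)$ lies in $Y$ and $c$ is a non-zerodivisor on $R$, the hypothesis that $Y$ is $R$-torsionfree forces $f(ax) = af(x)$. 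So $f$ is $A$-linear.

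For (2): the nonzero torsion elements of $X$ as an $R$-module are the same as those as an $A$-module. Indeed, an element $x \in X$ is $R$-torsion iff $cx = 0$ for some non-zerodivisor $c \in R$, and it is $A$-torsion iff $ax = 0$ for some non-zerodivisor $a$ on $A$. Since $R \subseteq A$ and a non-zerodivisor of $R$ remains a non-zerodivisor on $A$ (as $A \subseteq \rmQ(R)$), $R$-torsion implies $A$-torsion; conversely, given a non-zerodivisor $a$ on $A$ with $ax = 0$, write $a = b/c$ and note $b = ca$ is then a non-zerodivisor on $R$ with $bx = c(ax) = 0$, so $x$ is $R$-torsion. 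Thus the torsion submodules coincide, and $X$ is torsionfree over $A$ iff it is torsionfree over $R$.

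For (3): assume $X$ is $R$-torsionfree and nonzero. By (1) applied with $Y = X$ (which is $R$-torsionfree by hypothesis), we have $\End_R(X) = \Hom_R(X,X) = \Hom_A(X,X) = \End_A(X)$ as rings. A nonzero module over any ring is indecomposable precisely when its endomorphism ring has no idempotents other than $0$ and $1$; since the two endomorphism rings are literally equal, $X$ is indecomposable over $A$ iff it is indecomposable over $R$. The main (and only) obstacle is the care needed in (1)–(2) around non-zerodivisors — making sure that clearing denominators is legitimate and that torsionfreeness over $R$ is exactly what lets one cancel the non-zerodivisor $c$ — but this is routine once the birational-extension structure is used, and (3) is then immediate.
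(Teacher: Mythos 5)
Your proof is correct and follows essentially the same route as the paper: clear the denominator of $\alpha=b/c$ and use $R$-torsionfreeness of $Y$ to cancel the non-zerodivisor $c$ for (1), match up non-zerodivisors of $R$ and of $A$ via the birational structure for (2), and deduce (3) from the equality $\End_R(X)=\End_A(X)$ given by (1). The only difference is that you spell out the idempotent argument for (3), which the paper leaves implicit.
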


\begin{proof}
$(1)$ We only verify that every $R$-linear map $f: X \to Y$ is $A$-linear. Let $\alpha \in A$ and $x \in X$. We write $\alpha = \frac{a}{s}$ with $a, s \in R$ and $s$ is a non-zerodivisor on $R$. As $Y$ is $R$-torsionfree, the equalities
$$
s f(\alpha x) = f (s (\alpha x)) = f((s \alpha)x) = f(ax) = a f(x) = (s\alpha)f(x) = s (\alpha f(x)) 
$$
show that $f(\alpha x) = \alpha f(x)$. Hence $f$ is $A$-linear. 

$(2)$ Since $A$ is a birational extension of $R$, every non-zerodivisor on $A$ is, once $X$ is torsionfree as an $R$-module, a non-zerodivisor on $X$, and vice versa.

$(3)$ This follows from $(1)$. 
\end{proof}

\begin{lem}\label{2.3}
For an $A$-module $M$, we set $\fka = \tr_R(M)$. Then $\fka A = \fka$ and $\fka \subseteq R:A$. 
\end{lem}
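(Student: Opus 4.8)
The plan is to reduce both statements to the single inclusion $\fka A \subseteq \fka$. Indeed, $\fka = \tr_R(M)$ is by definition an $R$-submodule of $R$, hence an ideal of $R$; the opposite inclusion $\fka \subseteq \fka A$ is automatic from $1 \in A$, so once $\fka A \subseteq \fka$ is known we get $\fka A = \fka$, and then $\fka A = \fka \subseteq R$ says precisely that $\fka \subseteq R:A$.

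To prove $\fka A \subseteq \fka$, I would use the defining description $\fka = \sum_{f} f(M)$, where $f$ runs through $\Hom_R(M,R)$ (viewing $M$ as an $R$-module by restriction of scalars), and fix $\alpha \in A$, $f \in \Hom_R(M,R)$ and $m \in M$; it suffices to check $\alpha f(m) \in \fka$. The point requiring care is that $\alpha f(m)$ a priori only lies in $\rmQ(R)$. Here I would invoke Lemma \ref{2.2}(1): since $R$ is torsionfree as a module over itself, we have $\Hom_R(M,R) = \Hom_A(M,R)$, so $f$ is in fact $A$-linear. Consequently $\alpha f(m) = f(\alpha m)$, which lies in $f(M) \subseteq \fka$ because $\alpha m \in M$ (as $M$ is an $A$-module). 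Thus $\alpha\fka \subseteq \fka$ for every $\alpha \in A$, completing the argument.

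The obstacle here is conceptual rather than computational: one must observe that multiplication by $\alpha$ is an honest $R$-linear (indeed $A$-linear) endomorphism of $M$, so that $f$ composed with it stays inside $\Hom_R(M,R)$, and that the identity $\alpha f(m) = f(\alpha m)$ is legitimate exactly because Lemma \ref{2.2}(1) applies with the (tautologically torsionfree) module $R$ in the role of $Y$. No multiplicity, dimension, or Cohen--Macaulayness hypotheses on $R$ are needed.
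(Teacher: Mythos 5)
Your argument is correct and is essentially the paper's own proof. The one imprecision is the appeal to Lemma \ref{2.2}(1) with $Y=R$: as stated, that lemma requires $Y$ to be an $A$-module, and $R$ need not be one (only $R\subseteq A$), so ``$f$ is $A$-linear as a map into $R$'' does not quite typecheck. The paper repairs this exactly in the way your computation implicitly does: compose $f$ with the embedding $R\hookrightarrow A$ to get $g\colon M\to A$, apply Lemma \ref{2.2}(1) with $Y=A$ (which is $R$-torsionfree, being a submodule of $\rmQ(R)$), and conclude $\alpha f(x)=g(\alpha x)=f(\alpha x)\in f(M)\subseteq\fka$ --- the rest of your reduction ($\fka A=\fka$ and hence $\fka\subseteq R:A$) is the same as the paper's.
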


\begin{proof}
Given every $R$-linear map $f: M \to R$, we have $f(M) \subseteq \fka$, so the composite map $g : M \overset{f}{\to} R \hookrightarrow A$ forms $A$-linear. Hence, for each $\alpha \in A$ and $x \in M$, we have
$$
\alpha  \tau(f \otimes x) = \alpha f(x) = \alpha g(x) = g(\alpha x) = f(\alpha x) \in f(M) \subseteq \fka. 
$$
This shows $\fka A \subseteq \fka$. Therefore $\fka A = \fka$ and $\fka \subseteq R:A$. 
\end{proof}

We close this section by proving the following (cf. \cite[Theorem 2.9]{DMS}). 

\begin{prop}\label{2.4}
Let $M$ be a reflexive $R$-module and $M \ne (0)$. We set $\fka = \tr_R(M)$.  Then $M$ is an $A$-module by extending the $R$-action if and only if $\fka A= \fka$, or equivalently, $\fka \subseteq R:A$.  
\end{prop}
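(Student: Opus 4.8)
The plan is to prove the equivalence in two directions, exploiting that $M$ is reflexive (hence in particular $R$-torsionfree) and that $A$ is a birational module-finite extension of $R$.

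First I would dispose of the ``only if'' direction, which is essentially Lemma \ref{2.3}: if $M$ already carries an $A$-module structure extending the $R$-action, then viewing $M$ as an $A$-module we have $\fka = \tr_R(M)$, and Lemma \ref{2.3} immediately gives $\fka A = \fka$ and $\fka \subseteq R:A$. (The equivalence of the two conditions $\fka A = \fka$ and $\fka \subseteq R:A$ is formal: $\fka A = \fka$ means $\alpha \fka \subseteq \fka \subseteq R$ for all $\alpha \in A$, which says exactly $\alpha \in R:\fka$... but more directly, since $\fka A = \fka$ forces $A \fka \subseteq \fka \subseteq R$ hence $A \subseteq R:\fka$; conversely if $\fka \subseteq R:A$ then $A\fka \subseteq R$, and since also $A\fka$ is an $A$-submodule of $\rmQ(R)$ containing $\fka$ whose elements multiply $\fka$ into... one shows $A\fka \subseteq \fka$ by noting $A\fka \cdot A \subseteq A\fka$ — actually the cleanest route is: $\fka \subseteq R:A \iff A \fka \subseteq R \iff \fka \cdot A \subseteq \fka$ because any ideal $\fkb$ of $R$ with $\fkb A \subseteq R$ satisfies $\fkb A \subseteq \fkb$ once $\fkb = \tr_R(M)$ is a trace ideal — but here it's cleaner to just state both are equivalent by the same computation as in Lemma \ref{2.3}, which I would reference.)

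The substantive direction is ``if'': assuming $\fka A = \fka$, I must construct an $A$-module structure on $M$ extending the given $R$-action. The key idea is that a reflexive module is recovered from its dual: since $M$ is reflexive, $M \cong M^{**} = \Hom_R(\Hom_R(M,R), R)$, and I would transport an $A$-action to the right-hand side. Concretely, for $\alpha \in A$ and $g \in M^* = \Hom_R(M,R)$, note that $g(M) \subseteq \tr_R(M) = \fka$, so $\alpha g(M) \subseteq \alpha \fka \subseteq \fka \subseteq R$; hence the map $x \mapsto \alpha g(x)$ is again an $R$-linear map $M \to R$, giving a well-defined $R$-linear action of $A$ on $M^*$ (one checks $R$-linearity and associativity routinely). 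Then $M^{**} = \Hom_R(M^*, R)$ inherits an $A$-action by $(\alpha \cdot \psi)(g) = \psi(\alpha \cdot g)$ for $\psi \in M^{**}$; this is again $R$-linear in each slot, and dualizing an $A$-linear action stays well defined because $R \to A$ makes everything compatible. Finally I would check that the canonical isomorphism $\varphi: M \xrightarrow{\ \sim\ } M^{**}$ then pulls this structure back to an $A$-module structure on $M$ which restricts to the original $R$-action on $M$ (the restriction claim is immediate since for $\alpha \in R$ the action is the old one at every stage). By Lemma \ref{2.2}(1), since the extended structure makes $M$ an $A$-module and $M$ is $R$-torsionfree, the $A$-action is the unique one compatible with the $R$-structure, so there is no ambiguity.

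The main obstacle — really the only delicate point — is verifying that the $A$-action I define on $M^*$ is genuinely $R$-linear and not merely additive, and that after double-dualizing and transporting along $\varphi$ the resulting scalar multiplication on $M$ is associative and unital as an $A$-action; all of this hinges on the single containment $\alpha g(M) \subseteq \fka$, which is exactly what $\fka A = \fka$ buys us, so the hypothesis is used in precisely the place it must be. An alternative, perhaps slicker, approach would be to observe that $M \cong \Hom_R(R:M,\, R)$ does not obviously carry the structure, but rather to use that $M^* = \Hom_R(M,R) \cong \Hom_R(M,\fka)$ as $R$-modules (since every map lands in $\fka$), and that $\fka$ is an $A$-module by $\fka A = \fka$, hence $\Hom_R(M,\fka) = \Hom_R(M, \fka)$ is an $A$-module via the target; then reflexivity identifies $M$ with a double dual built from $A$-modules. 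Either way, the reflexivity of $M$ is what lets us relocate the action to a Hom-set where the target is visibly an $A$-module, and that is the heart of the argument.
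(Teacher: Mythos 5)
Your proof is correct and follows essentially the same route as the paper: the ``only if'' direction via Lemma \ref{2.3}, and the ``if'' direction by using $\alpha\, g(M)\subseteq\alpha\,\fka\subseteq R$ to put an $A$-module structure on $M^{*}$ and hence on $M^{**}\cong M$, which is exactly the paper's $R$-algebra map $A\to\Hom_R(\fka,\fka)\to\Hom_R(M,M)$ unwound. The only point to tighten is the equivalence $\fka\subseteq R:A\iff\fka A=\fka$: the nontrivial implication follows cleanly from Proposition \ref{1.10}\,(3), since each $\alpha\in A\subseteq R:\fka$ defines an $R$-linear map $\fka\to R$ whose image must lie in the trace ideal $\fka$.
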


\begin{proof}
Since $M$ is reflexive and $M \ne (0)$, we obtain $\fka  \ne (0)$ (\cite[Proposition 2.8 (vii)]{L}). If $M$ has an $A$-module structure by extending the $R$-action, then Lemma \ref{2.3} shows $\fka A = \fka$, so that $\fka \subseteq R:A$. On the other hand, suppose $\fka \subseteq R:A$. For every $f \in \Hom_R(M, R)$, the homomorphism
$g = \widehat{\alpha} \circ f : M \to \fka \to R$ is $R$-linear, where $\widehat{\alpha}: \fka \to R$ denotes the homothety map. So, $g \in \Hom_R(M, R)$ and $g(M) \subseteq \fka$. Therefore, as $g(x) = \alpha f(x)$ for all $x \in M$, we get $\fka A \subseteq \fka$, i.e., $\fka A = \fka$. We finally assume $\fka A = \fka$. The map $\psi : A \to \Hom_R(\fka, \fka)$, $\psi(\alpha) = \widehat{\alpha}$ is well-defined.  We now consider the $R$-algebra map
$$
A \overset{\psi}{\longrightarrow} \Hom_R(\fka, \fka) \overset{\varphi}{\longrightarrow} \Hom_R(M, M)
$$
where $\varphi$ is defined by the following way (see \cite[page 109]{L}): 
$$
\varphi : \Hom_R(\fka, \fka) \!=\! \Hom_R(\fka, R) \overset{\tau^*}{\hookrightarrow} \Hom_R(M^*\otimes_RM, R) \!\overset{\sim}{\rightarrow} \!\Hom_R(M, M^{**}) \!\overset{\sim}{\rightarrow} \!\Hom_R(M, M).
$$ 
We need the reflexivity of $M$ to construct the above morphism $\varphi$. 
Then $M$ is an $A$-module by the action $\alpha \underset{A}{\rightharpoonup} x = (\varphi(\widehat{\alpha}))x$ for all $\alpha \in A$ and $x \in M$. Moreover, we have $a \underset{A}{\rightharpoonup} x = ax$ for all $a \in R$ and $x \in M$. This completes the proof. 
\end{proof}

%%%%%%%%%%%%%%%%%%%%%%%%%%%%%%%%%%%%%%%%%%%%%%%%%%%%%%%%%%%%%%%%%%%%%%%%%%%%%%%%%%%%%%%%%%%%%%%%%%%%%%%%%%%%%%%%%%%%%%%%%%%%%%%%%%%%%%%%%%%%%%%%%%%%%%%%%%%%%%%%%%%%%%%%%%%%%%%%%%%%%%

\section{Main results}\label{sec3}

First of all we fix the notation on which all the results in this section are based.

\begin{setting}\label{3.1}
Let $(R, \m)$ be a Cohen-Macaulay local ring with $\dim R=1$. Suppose that $\rmQ(R)$ is a Gorenstein ring. Let $I$ be a regular reflexive trace ideal of $R$. We set $A = \End_R(I)$ and identify $A$ with $I : I$. 
One can verify that $A = R:I \in \mod\, R$, $R \subseteq A \subseteq \overline{R}$, $R:A = I$, and $R:(R:A) = A$. For an ideal $J$ of $R$, we define $\OCM(R, J)$ the full subcategory of $\OCM(R)$ consisting of modules $M \in \OCM(R)$ satisfying $\tr_R(M)\subseteq J$. 
When $R$ admits the canonical module $\rmK_R$, an $R$-submodule $K$ of $\rmQ(R)$ is called a {\it fractional canonical ideal} of $R$, if $R \subseteq K \subseteq \overline{R}$ and $K \cong \rmK_R$ as an $R$-module. 
If $\rmK_R$ exists and the residue class field $R/\m$ is infinite, as is known by \cite[Proposition 2.3]{EGGHIKMT} and \cite[Corollary 2.8]{GMP}, the ring $R$ possesses a fractional canonical ideal if and only if $\rmQ(R)$ is Gorenstein; equivalently $\rmQ(\widehat{R})$ is a Gorenstein ring, where $\widehat{R}$ denotes the $\m$-adic completion of $R$; see \cite[Proposition 2.7]{GMP}. 
 We set $S = R[K]$ and $\fkc = R:S$. 
\end{setting}

%Remember that we can choose a fractional ideal $K$ which is isomorphic to the canonical module $\rmK_R$; $R \subseteq   K \subseteq \overline{R}$ s.t. $K \cong \rmK_R$. By using this, we set $S = R[K]$ and $\fkc =R:S$. Then $S$ is a birational finite extension of $R$, so that $\fkc = R:S$ is a regular reflexive trace ideal of $R$. 
In the following, simply saying that $R$ admits a fractional canonical ideal will assume the existence of the canonical module $\rmK_R$ without refusal. We denote by $\mu_R(-)$ the number of elements in a minimal system of generators.

With this notation the main result of this paper is stated as follows. 

\begin{thm}\label{main}
The following assertions hold true. 
\begin{enumerate}[$(1)$]
\item One has $\OCM(A) \subseteq \OCM(R, I) \subseteq \CM(A)$ as subcategories of $\mod\, R$.
\item The equality $\OCM(A) = \OCM(R, I)$ holds if and only if the ideal $I$ is stable. 
\item Suppose that $R$ admits a fractional canonical ideal $K$. Then the equality $\OCM(R, I) = \CM(A)$ holds if and only if $I \subseteq \fkc$, or equivalently, $IK = I$. 
\end{enumerate}
\end{thm}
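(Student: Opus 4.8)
The plan is to prove the three parts in turn, relying heavily on the correspondence in Setting~\ref{3.1} (namely $A = R:I$, $R:A = I$, $R:(R:A)=A$) together with Proposition~\ref{2.4}, which characterizes which reflexive $R$-modules carry an $A$-action via the condition $\tr_R(M)\subseteq R:A = I$.

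\textbf{Part (1).} For the inclusion $\OCM(A)\subseteq\OCM(R,I)$: if $M\in\OCM(A)$, then $M$ sits in an exact sequence $0\to M\to F\to X\to 0$ with $F$ free over $A$ and $X\in\CM(A)$; since $A$ is torsionfree over $R$ (being a birational extension), Lemma~\ref{2.2}(2) gives $M\in\CM(R)$, and viewing the sequence over $R$ realizes $M$ as a first syzygy of the $R$-module $X$, so $M\in\OCM(R)$; finally, by Lemma~\ref{2.3} applied to the $A$-module $M$, $\fka:=\tr_R(M)$ satisfies $\fka\subseteq R:A=I$, i.e. $M\in\OCM(R,I)$. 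For $\OCM(R,I)\subseteq\CM(A)$: if $M\in\OCM(R,I)$ then $M$ is a nonzero first syzygy of an MCM $R$-module, hence reflexive over $R$ (first syzygies of MCM modules over a one-dimensional CM ring with Gorenstein total quotient ring are reflexive, as noted in the introduction), and $\tr_R(M)\subseteq I = R:A$, so Proposition~\ref{2.4} endows $M$ with an $A$-module structure extending the $R$-action; then $M$ is torsionfree over $A$ by Lemma~\ref{2.2}(2), i.e. $M\in\CM(A)$.

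\textbf{Part (2).} First suppose $I$ is stable, so $A = R^I = I:I$ and $I^2 = aI$ for some regular $a\in I$. We must show $\OCM(R,I)\subseteq\OCM(A)$. Take $M\in\OCM(R,I)\subseteq\CM(A)$ (by (1)); $M$ is reflexive over $A$ by Lemma~\ref{2.2} and the identification $\Ref$ with $\OCM$ applied over $A$ — more precisely, I would argue that $M$ reflexive over $R$ with $A$-structure forces $M$ reflexive over $A$ using $R:A = I$ is reflexive and the fact that for stable $I$ one has $I:I = R:I$ (Proposition~\ref{1.10}(4)) so that $A$ is a ``nice'' extension; then every torsionfree $A$-module is a first syzygy over $A$, giving $M\in\OCM(A)$. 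Conversely, if $\OCM(A)=\OCM(R,I)$, I would derive stability by a dimension/length count: the key identity to exploit is that $A = I:I$ is always contained in $R^I$, with equality iff $I$ is stable, and $\OCM(R,I)=\CM(A)$-type phenomena fail unless the two endomorphism algebras coincide; concretely, if $I$ were not stable, then $R^I\supsetneq A$ and one can produce a module in $\OCM(R,I)$ (e.g. built from a higher $I^n:I^n$) that is not a first syzygy of an MCM $A$-module, contradicting the equality. This converse direction is where I expect to do real work.

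\textbf{Part (3).} Here $R$ admits a fractional canonical ideal $K$, $S = R[K]$, $\fkc = R:S$. The equivalence $I\subseteq\fkc \iff IK = I$ is quick: $I\subseteq\fkc = R:S = R:R[K]$ means $IK\subseteq I$ (since $I\cdot R[K]\subseteq R$ iff ... actually $I\subseteq R:S$ means $IS\subseteq R$, hence $IK\subseteq R$ and then $IK\subseteq IK\cdot K^{?}$...); the clean statement is $I\subseteq\fkc \iff IS\subseteq R \iff IS = I$ (as $R\subseteq S$ and $IS\subseteq R$ forces $IS$ to be an ideal contained in $R$ containing $I$, and one checks $IS\subseteq I$ via $I$ being a trace ideal, hence $IS=I$), and $IS = I \iff IK = I$ since $S = R[K] = \bigcup K^n$ and $IK=I\Rightarrow IK^n = I$. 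Now for the main equivalence: $\OCM(R,I)=\CM(A)$ should be detected by comparing ``how big'' $\CM(A)$ is. I would use that the canonical module $\rmK_A$ of $A$ can be taken to be $KA$ (or $K:A$-type object), and $A$ is almost Gorenstein-like exactly when... The condition $IK = I$, i.e. $I$ is an $S$-module, should translate via $A = R:I$ into a comparison between $A$ and $S$ forcing the canonical module of $A$ to behave so that all MCM $A$-modules are first syzygies over $R$ with trace in $I$. The reverse direction: if $IK\ne I$, produce an MCM $A$-module $N$ with $\tr_R(N)\not\subseteq I$ — a natural candidate is $N = KA$ or a syzygy thereof, whose $R$-trace strictly exceeds $I$ precisely because $IK\supsetneq I$.

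\textbf{Main obstacle.} The routine parts are the trace-ideal manipulations and the ``iff'' reformulations ($I$ stable $\iff I^2=aI$; $I\subseteq\fkc\iff IK=I$). The genuine difficulty is the ``only if'' directions in (2) and (3): constructing explicit modules in $\OCM(R,I)$ (resp. in $\CM(A)$) that witness the failure of the reverse inclusion when $I$ is not stable (resp. when $IK\ne I$). I expect this to mirror the strategy of \cite{GMP} and \cite{K}: in the non-stable case use a module associated to $R^I/A$, and in the non-canonical case use a module associated to $S$ or $\rmK_R$, carefully computing its $R$-trace. Controlling these traces — showing they land outside $I$ — will require the structure theory of $R^I$ and of fractional canonical ideals developed in Section~\ref{sec2}, and this is the step I would budget the most care for.
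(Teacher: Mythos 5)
Your part (1) is essentially on the right track but contains one claim that is false as written: the exact sequence $0\to M\to A^{\oplus n}\to X\to 0$ does \emph{not} realize $M$ as a first syzygy over $R$, because $A^{\oplus n}$ is a free $A$-module, not a free $R$-module. What saves the argument is that $\OCM(R)=\Ref(R)$ and that $A^{\oplus n}$ is $R$-reflexive (since $A=R:I$), so one needs a lemma that a submodule of a reflexive module with torsionfree cokernel is again reflexive (the paper invokes \cite[Lemma 2.6]{K} for this, after a separate case analysis for free summands). This is fixable, but as stated it is a gap.

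Parts (2) and (3) are where the proposal breaks down. In (2), the direction ``$I$ stable $\Rightarrow\OCM(R,I)\subseteq\OCM(A)$'' rests on the assertion that ``every torsionfree $A$-module is a first syzygy over $A$,'' which says $\CM(A)=\OCM(A)$, i.e.\ that $A$ is Gorenstein — false in general and certainly not a consequence of stability alone (that is Corollary \ref{3.3}, which needs the extra hypothesis $I\subseteq\fkc$). Likewise, if ``$M$ reflexive over $R$ with an $A$-structure forces $M$ reflexive over $A$'' were true unconditionally, part (2) would hold without any stability hypothesis, contradicting the ``only if.'' The actual mechanism is concrete: stability gives $I=aA\cong A$ as $A$-modules, so $I^{\oplus n}$ is $A$-free; since $\tr_R(M)\subseteq I$, the $R$-free presentation $M\hookrightarrow R^{\oplus n}$ factors through $I^{\oplus n}$, and this factorization is automatically $A$-linear with torsionfree cokernel, exhibiting $M$ as a first syzygy over $A$. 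For the converse you say you expect ``real work,'' but the witness is simply $I$ itself: $I=\tr_R(I)$ is a reflexive trace ideal, so $I\in\OCM(R,I)$; equality then puts $I\in\OCM(A)=\Ref(A)$, and a reflexive fractional ideal with $\End_A(I)=A$ is isomorphic to $A$ (\cite[Lemma 2.9]{K}), i.e.\ $I=aA$ is stable. No length count or comparison with $R^I$ is needed.

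In (3), your proposed strategy for the ``only if'' direction — produce $N\in\CM(A)$ with $\tr_R(N)\not\subseteq I$ — cannot succeed: by Lemma \ref{2.3}, \emph{every} $A$-module $N$ satisfies $\tr_R(N)\subseteq R:A=I$, so the trace condition is never the obstruction. The obstruction to $\CM(A)\subseteq\OCM(R,I)$ is failure of $R$-reflexivity. The paper's argument takes $\rmK_A=K:A=IK\in\CM(A)$, assumes it lies in $\OCM(R,I)$, embeds it into $I^{\oplus n}$, shows the embedding splits, and then via an endomorphism-ring and completion argument deduces $I\cong IK$ as $A$-modules, whence $IS=I$ and $I\subseteq\fkc$. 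The ``if'' direction dualizes an $A$-free presentation of $M^{\vee}$ to get $0\to M\to I^{\oplus n}\to X^{\vee}\to 0$ (using $\rmK_A^{\oplus n}=I^{\oplus n}$) and concludes by a pushout with an $R$-free presentation of $I^{\oplus n}$. Your reformulation $I\subseteq\fkc\iff IK=I$ is essentially correct, but the two main implications of (3), and both implications of (2), are not established by the proposal.
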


\begin{proof}
$(1)$~Let $M \in \OCM(A)$. We may assume $M \ne (0)$ and may choose an exact sequence $0 \to M \to A^{\oplus n} \to A^{\oplus \ell}$ of $A$-modules with $n, \ell > 0$. Suppose that $R$ is a direct summand of $A^{\oplus n}$ as an $R$-module. By \cite[Proposition 2.8 (iii)]{L}, we can take an $R$-module $Y$ such that $A \cong R \oplus Y$. Since $A/R$ is torsion as an $R$-module, so is $Y$. This shows $Y=(0)$ because $Y \subseteq R \oplus Y \cong A$. Hence $A \cong R$ as an $R$-module. In particular, $\mu_R(A) = 1$. As $1 \not\in \m A$, we obtain $R=A$. Therefore $M \in \OCM(R)$ and $\tr_R(M)  \subseteq R=I$. 
Next, we consider the case where $R$ is not a direct summand of $A^{\oplus n}$. Because $A^{\oplus n}$ is $R$-reflexive, we get $A^{\oplus n} \in \OCM_{\calP}(R)$. We divide the exact sequence $0 \to M \to A^{\oplus n} \to A^{\oplus \ell}$ into the sequences
$$
0 \to M \to A^{\oplus n} \to X \to 0 \ \ \text{and} \ \ 0 \to X \to A^{\oplus \ell}
$$
of $A$-modules. Lemma \ref{2.2} (2) guarantees $X$ is a torsionfree $R$-module, that is $X \in \CM(R)$. By \cite[Lemma 2.6]{K}, we get $M \in \OCM_{\calP}(R) \subseteq \OCM(R)$. Let $\fka = \tr_R(M)$. Then 
$$
\fka \subseteq R:A = I
$$
by Proposition \ref{2.4}. Hence, in each case, we have $\OCM(A) \subseteq \OCM(R, I)$.

%%%%%%%%%%%%%%%%%%%%

Let $M \in \OCM(R, I)$. May assume $M \ne (0)$. Since $\tr_R(M) \subseteq I = R:A$, the module $M$ has an $A$-module structure by Proposition \ref{2.4}. As $M$ is a torsionfree $R$-module, by Lemma \ref{2.2} (2), $M$ is torsionfree as an $A$-module. Thus $M \in \CM(A)$. Therefore we have
$\OCM(A) \subseteq \OCM(R, I) \subseteq \CM(A)$ as subcategories of $\mod\, R$. 

$(2)$~Suppose $\OCM(A) = \OCM(R, I)$. By \cite[Corollary 2.2]{GIK}, we note that $I = (R:I)I = \tr_R(I)$.  Because $I$ is $R$-reflexive, we get $I \in \OCM(R)$. So, $I \in \OCM(R, I) = \OCM(A)$. The ideal $I$ has an $A$-module structure and is reflexive as an $A$-module (remember that $\rmQ(A) = \rmQ(R)$ is a Gorenstein ring).  Since $A = I:I$, we obtain an isomorphism $I \cong A$ as an $A$-module (\cite[Lemma 2.9]{K}). We can choose $a \in I$ such that $I = a A$. Hence $I^2 = (aA)^2 = a (aA) = a I$. 
Conversely, we assume $I$ is stable, i.e.,  $I^2 = a I$ for some $a \in I$. Then 
$$
A = I:I = \frac{I}{a} \ \ \ \text{and} \ \ \ I = aA \cong A \ \ \ \text{in}\ \ \mod\, A.  
$$
For each $M \in \OCM(R, I)$, we have $\tr_R(M) \subseteq I = R:A$ and $M$ is a reflexive $R$-module.  By Proposition \ref{2.4}, $M$ has an $A$-module structure which naturally extends the structure of $R$-module. As $M \in \OCM(R)$, we may choose an exact sequence
$$
0 \to M \overset{\varphi}{\to} R^{\oplus n} \to R^{\oplus \ell}
$$
of $R$-modules, where $ n >0$ and $\ell \ge 0$. Then $\Im \varphi \subseteq I^{\oplus n}$ because $\tr_R(M) \subseteq I$, so we can consider an $R$-linear map
$$
\varphi' : M \to I^{\oplus n}
$$
defined by $\varphi'(x) = \varphi(x)$ for each $x \in M$. Since $I^{\oplus n}$ is an $A$-module and is torsionfree as an $R$-module, it is a torsionfree $A$-module. Hence the equality
$$
\Hom_R(M, I^{\oplus n}) = \Hom_A(M, I^{\oplus n})
$$ 
holds. Thus $\varphi' : M \to I^{\oplus n}$ is an $A$-linear map. 
Look at the commutative diagram
$$
\xymatrix{
0 \ \ar[r] &M \ar[r]^{\varphi} & R^{\oplus n}\ar[r] &X\ar[r] &  \ 0 \ \ \ \text{in} \ \ \mod\, R \\
0 \ \ar[r] &M \ar[r]^{\varphi'}\ar@{=}[u] & I^{\oplus n}\ar[r]\ar[u]^{i} &X'\ar[r] & \ 0 \ \ \ \text{in} \ \ \mod\, A
}
$$
of $R$-modules. We then have an injective $R$-linear map $h: X' \to X$, where $X' = \Coker \varphi' \in \mod\, A$. Hence $X'$ is a torsionfree $R$-module, so that it is torsionfree as an $A$-module. Consequently $M \in \OCM(A)$, as desired.

$(3)$~Suppose the existence of a fractional canonical ideal $K$ of $R$. Note that $A = I:I = R:I = (K:K):I = K:IK$ (\cite[Bemerkung 2.5]{HK}). Set $\rmK_A = K:A$. Then $\rmK_A = K:(K:IK) = IK$. We first assume $\OCM(R, I) = \CM(A)$. Then $IK=\rmK_A \in \OCM(R)$ and $\tr_R(IK) \subseteq I$. Choose an exact sequence
$$
0 \to IK \overset{\varphi}{\to} R^{\oplus n} \to R^{\oplus \ell}
$$
of $R$-modules, where $n>0$ and $\ell \ge 0$. We then have $\Im \varphi \subseteq I^{\oplus n}$ which induces the $R$-linear map $\varphi': IK \to I^{\oplus n}$ defined by $\varphi'(x) = \varphi(x)$ for each $x \in IK$. Because $I^{\oplus n}$ is $A$-torsionfree, the map $\varphi': IK \to I^{\oplus n}$ is $A$-linear. Look at the commutative diagram
$$
\xymatrix{
0 \ \ar[r] &IK \ar[r]^{\varphi} & R^{\oplus n}\ar[r] &X\ar[r] &  \ 0 \ \ \ \text{in} \ \ \mod\, R \\
0 \ \ar[r] &IK \ar[r]^{\varphi'} \ar@{=}[u] & I^{\oplus n}\ar[r]\ar[u]^{i} &X'\ar[r]\ar[u]^{h} & \ 0 \ \ \ \text{in} \ \ \mod\, A
}
$$
of $R$-modules, where $X' = \Coker \varphi' \in \mod\, A$. Then $\depth_RX'>0$, so we have an exact sequence
$$
0 \to \Hom_A(X', IK) \to \Hom_A(I^{\oplus n}, IK) \to \Hom_A(IK, IK) \to 0
$$
of $A$-modules, because $\Ext^1_A(X', IK) = (0)$. This shows the map $\varphi': IK \to I^{\oplus n}$ is a split monomorphism. Hence $IK$ is a direct summand of $I^{\oplus n}$ as an $A$-module. By choosing an $A$-module $X$ with $I^{\oplus n} \cong IK \oplus X$, we have
\begin{eqnarray*}
{\rm M}_n(A) &=& {\rm M}_n(I:I) \cong  {\rm M}_n(\Hom_A(I, I)) \cong \End_A(I^{\oplus n}) \cong \End_A(IK \oplus X) \\
&\cong&
\begin{bmatrix}
\Hom_A(IK, IK) & \Hom_A(X, IK) \\
\Hom_A(IK, X) & \Hom_A(X, X) \\
\end{bmatrix}
\end{eqnarray*}
where ${\rm M}_n(A)$ denotes the matrix ring of $n$-th square matrices whose entries are in $A$. Since ${\rm M}_n(A)$ is a finitely generated free $A$-module of rank $n^2$, the direct summand $Y=\Hom_A(X, IK)$ of ${\rm M}_n(A)$ is projective as an $A$-module. 
For each $\p \in \Ass A$, we have
$$
A_{\p} \oplus X_{\p} \cong (IK \oplus X)_{\p} \cong (I^{\oplus n})_{\p} \cong (I A_{\p})^{\oplus n} = A_{\p}^{\oplus n}
$$
because $A_{\p}$ is a Gorenstein ring. So $X_{\p}$ is a direct summand of $A_{\p}^{\oplus n}$, i.e., $X_{\p}$ is projective. Hence, $X_{\p}$ is a finitely generated free $A_{\p}$-module of rank $n-1$. This induces the isomorphisms
$$
Y_{\p} \cong \Hom_{A_{\p}}(X_{\p}, (IK)_{\p}) \cong \Hom_{A_{\p}}(X_{\p}, A_{\p}) \cong A_{\p}^{\oplus (n-1)}
$$
of $A_{\p}$-modules, which yield that $Y \cong A^{\oplus (n-1)}$ as an $A$-module because $Y$ is projective and $A$ is a semi-local ring. Therefore, we get the isomorphisms
$$
X \cong \Hom_A(\Hom_A(X, \rmK_A), \rmK_A) \cong \Hom_A(A^{\oplus (n-1)}, \rmK_A) \cong \rmK_A^{\oplus (n-1)}
$$
and hence $I^{\oplus n} \cong \rmK_A \oplus X \cong \rmK_A^{\oplus n}$ as an $A$-module. Since $IA_{P}$ is a faithful ideal possessing the injective dimension one, we have 
$$
IA_{P} \cong (\rmK_A)_P
$$
for every $P \in \Max A$. Therefore, we obtain the isomorphism
$$
\widehat{A} \otimes_A I \cong \widehat{A}\otimes_A \rmK_A
$$
of $\widehat{A}$-modules, where $\widehat{A}$ denotes the $\rmJ(A)$-adic completion of $A$. Here $\rmJ(A)$ is the Jacobson radical of $A$. Hence, as an $A$-module, we have $I \cong \rmK_A = IK$. This shows $IK = \alpha I$ for some unit $\alpha \in \rmQ(R)$. Thus, for all $\ell >0$, the equality $IK^{\ell} = \alpha^{\ell}I$ holds. As $S = R[K]$, by choosing $\ell \gg 0$ with $S = K^{\ell}$, we have $IS = \alpha^{\ell}I$, so that
$$
\alpha^{\ell} I =IS = IS \cdot S = \alpha^{\ell} I S.
$$
This implies $I = IS$, whence $I \subseteq I : S \subseteq R:S = \fkc$. 

Suppose $I \subseteq \fkc$. Notice that $\fkc = K:S$ and $R:\fkc = S$. Indeed, $\fkc = (K:K):S = K:KS = K:S$ and $R:\fkc = (K:K):\fkc = K:K\fkc= K:\fkc = K:(K:S) = S$. This shows $A = R:I \supseteq R:\fkc = S$. Since $I$ is a trace ideal of $R$ and $A = R:I$, we get $I \subseteq IK \subseteq IS \subseteq IA = I$. Hence $I = IK$. Conversely, if $IK = I$, then $IK^{\ell} = I$ for every $\ell > 0$. So, $IS = I$ and hence $I \subseteq R:S =\fkc$. Therefore, $I \subseteq \fkc$ if and only if $IK = I$. 

We finally assume $I \subseteq \fkc$ and prove $\OCM(R, I) = \CM(A)$. Indeed, let $M \in \CM(A)$ and consider $\fka = \tr_R(M)$. May assume $M \ne (0)$. By Lemma \ref{2.3}, we have $\fka \subseteq R:A = I$. Note that $M^{\vee} \in \CM(A)$, where $(-)^{\vee} = \Hom_A(-, \rmK_A)$.  Consider the exact sequence
$$
0 \to X \to A^{\oplus n} \to M^{\vee} \to 0
$$
of $A$-modules with $n > 0$. By applying the functor $(-)^{\vee}$ to the above sequence, we get
$$
0 \to M \to I^{\oplus n} \to X^{\vee} \to 0
$$
of $A$-modules because $M^{\vee\vee} \cong M$ and $(\rmK_A)^{\oplus n} = (IK)^{\oplus n} = I^{\oplus n}$. Since $I$ is reflexive as an $R$-module, we have $I^{\oplus n} \in \OCM(R)$. So we have an exact sequence
$$
0 \to I^{\oplus n} \to F_1 \to F_2
$$
of $R$-modules, where $F_1$ and $F_2$ are finitely generated free $R$-modules. By dividing the above sequence into the sequences
$0 \to I^{\oplus n} \overset{\alpha}{\to} F_1 \overset{\beta}{\to} Y \to 0$ and $0 \to Y \to F_2$, the pushout of $\alpha$ and $\beta$ gives a commutative diagram
$$
\xymatrix{
&  & 0 \ar[d] & 0 \ar[d] &  \\
0 \ar[r]& M \ar[r] \ar@{=}[d] & I^{\oplus n} \ar[r] \ar[d] & X^{\vee} \ar[r]\ar[d] & 0 \\
0 \ar[r]& M \ar[r] & F_1 \ar[r]\ar[d] & L \ar[r]\ar[d] & 0 \\
&  & Y \ar@{=}[r] \ar[d]& Y \ar[d] &  \\
&  & 0 & 0  &  
}
$$
with exact rows and columns. Since $X^{\vee}, Y \in \CM(R)$, we have $L \in \CM(R)$. Therefore $M \in \OCM(R)$ and hence $M \in \OCM(R, I)$. 
\end{proof}

We summarize some consequences. The next immediately follows from Theorem \ref{main}.

\begin{cor}[{cf. \cite[Theorem 7.9]{DMS}}]\label{3.3}
Suppose that $R$ admits a fractional canonical ideal $K$. Then the equality $\OCM(A) = \CM(A)$ holds if and only if  $I$ is stable and $I \subseteq \fkc$, or equivalently, $A$ is a Gorenstein ring. 
\end{cor}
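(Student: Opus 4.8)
The plan is to obtain this corollary as a purely formal consequence of Theorem~\ref{main}, which is exactly why it is stated as a corollary. The backbone is part~(1) of that theorem, which provides the chain
$$
\OCM(A)\subseteq\OCM(R,I)\subseteq\CM(A)
$$
of full subcategories of $\mod\,R$; by Lemma~\ref{2.2} these may equally well be regarded inside $\mod\,A$, since a torsionfree $R$-module carries at most one $A$-module structure extending its $R$-structure, so there is nothing to worry about concerning the ambient category. From such a two-step chain, the outer equality $\OCM(A)=\CM(A)$ holds if and only if both inner inclusions are equalities.

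Accordingly, the first step is to record that $\OCM(A)=\CM(A)$ is equivalent to the conjunction of $\OCM(A)=\OCM(R,I)$ and $\OCM(R,I)=\CM(A)$. The second step is to translate each of these via Theorem~\ref{main}: by part~(2), $\OCM(A)=\OCM(R,I)$ is equivalent to the stability of $I$; by part~(3) --- which is where the hypothesis that $R$ admits a fractional canonical ideal $K$ is used --- $\OCM(R,I)=\CM(A)$ is equivalent to $I\subseteq\fkc$, equivalently to $IK=I$. Conjoining the two translations yields the first asserted equivalence.

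For the remaining clause, that this is in turn equivalent to $A$ being a Gorenstein ring, essentially no new work is needed. One may simply cite the fact recalled in the introduction that, for a birational module-finite extension $A$ of $R$, $A$ is Gorenstein if and only if $\OCM(A)=\CM(A)$. If a self-contained argument is preferred, one can note that when $I$ is stable we have $I\cong A$ as an $A$-module (as in the proof of Theorem~\ref{main}(2)), while when $IK=I$ the computation $\rmK_A=K:A=IK$ from the proof of Theorem~\ref{main}(3) gives $\rmK_A=I\cong A$, so $A$ is Gorenstein; and conversely a Gorenstein $A$ satisfies $\OCM(A)=\CM(A)$ since $\OCM(A)=\Ref(A)$ and a one-dimensional Cohen-Macaulay ring is Gorenstein exactly when every maximal Cohen-Macaulay module is reflexive.

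I do not anticipate a genuine obstacle here. The only point meriting a word of care is the elementary observation that in a two-step chain of subcategories the total collapse is equivalent to the two partial collapses, and the attendant bookkeeping that the three categories of Theorem~\ref{main}(1) literally form one such chain; once that is in place, the corollary is a direct substitution from parts~(2) and~(3) of the main theorem.
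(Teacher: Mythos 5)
Your proposal is correct and follows exactly the route the paper intends: the paper's proof is the single sentence that the corollary ``immediately follows from Theorem \ref{main},'' and your decomposition of the chain $\OCM(A)\subseteq\OCM(R,I)\subseteq\CM(A)$ into the two equalities governed by parts (2) and (3), together with the fact recorded in the introduction that $A$ is Gorenstein precisely when $\OCM(A)=\CM(A)$, is the intended argument.
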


\begin{rem}
By \cite[Proposition 2.8 (iii)]{L}, a finitely generated $R$-module $M$ does not have free summands if and only if $\tr_R(M)\subseteq \m$. Hence, $\OCM(R, \m)$ coincides with $\OCM_{\calP}(R)$.
\end{rem}

Theorem \ref{main} recovers the results of \cite[Theorem 5.1]{GMP} and \cite[Theorem 1.3]{K}.

%S. Goto, N. Matsuoka, and T. T. Phuong, and T. Kobayashi. 

\begin{cor}[{\cite[Theorem 5.1]{GMP}, \cite[Theorem 1.3]{K}}]\label{3.4}
Suppose that $R$ admits a fractional canonical ideal $K$ and that $R$ is not a DVR. Set $E=\m:\m$. Then the following assertions hold true. 
\begin{enumerate}[$(1)$]
\item The equality $\OCM(E) = \OCM_{\calP}(R)$ holds if and only if $\m$ is stable.
\item The equality $\OCM_{\calP}(R) = \CM(E)$ holds if and only if $R$ is an almost Gorenstein ring.
\item $E$ is a Gorenstein ring if and only if $R$ is an almost Gorenstein ring and $\m$ is stable. 
\end{enumerate}
\end{cor}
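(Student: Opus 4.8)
The plan is to derive all three equivalences from Theorem \ref{main} and Corollary \ref{3.3} by specializing to the case $I=\m$. First I would record the two bookkeeping facts that make this possible. Since $R$ is not a DVR, the maximal ideal $\m$ is a regular reflexive trace ideal of $R$ by \cite[Lemma 3.15]{GMP}, so $\m$ fits into Setting \ref{3.1} and, with $I=\m$, we have $A=\m:\m=E$. Moreover, by \cite[Proposition 2.8 (iii)]{L} (as noted in the remark preceding Corollary \ref{3.4}), a finitely generated $R$-module $M$ satisfies $\tr_R(M)\subseteq\m$ if and only if $M$ has no nonzero free direct summand; hence $\OCM(R,\m)=\OCM_{\calP}(R)$ as subcategories of $\mod\,R$, and Theorem \ref{main} $(1)$ becomes the chain $\OCM(E)\subseteq\OCM_{\calP}(R)\subseteq\CM(E)$.

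Granting this, part $(1)$ is immediate from Theorem \ref{main} $(2)$ with $I=\m$, and for part $(2)$ Theorem \ref{main} $(3)$ with $I=\m$ yields that $\OCM_{\calP}(R)=\CM(E)$ holds if and only if $\m\subseteq\fkc$, equivalently $\m K=\m$. What remains for $(2)$ is to identify the condition $\m K=\m$ with the almost Gorenstein property of \cite{GMP}. I would argue as follows: the module $C=K/R$ has finite length, since it vanishes at every $\p\in\Ass R$ (there $R_\p$ is Gorenstein Artinian, so $K_\p\cong R_\p$); consequently the defining sequence $0\to R\to K\to C\to0$ exhibits $C$ as an Ulrich module precisely when $\m C=0$, that is, when $\m K\subseteq R$; and since $\m\subseteq\m K$ always holds and there is no ideal strictly between $\m$ and $R$, the inclusion $\m K\subseteq R$ is equivalent to $\m K=\m$. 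Thus $\OCM_{\calP}(R)=\CM(E)$ if and only if $R$ is almost Gorenstein.

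Part $(3)$ then follows formally. Recall from the introduction that a birational module-finite extension $B$ of $R$ is Gorenstein exactly when $\OCM(B)=\CM(B)$; applied to $E$, and using the chain $\OCM(E)\subseteq\OCM_{\calP}(R)\subseteq\CM(E)$, this says $E$ is Gorenstein if and only if both inclusions are equalities, i.e., by $(1)$ and $(2)$, if and only if $\m$ is stable and $R$ is almost Gorenstein. One could equally invoke Corollary \ref{3.3} directly with $I=\m$. The only step that is not purely formal specialization is the dictionary used in $(2)$ between the ideal-theoretic condition $\m\subseteq\fkc$ produced by Theorem \ref{main} $(3)$ and the module-theoretic definition of the almost Gorenstein property; I expect that to be the one place where a short separate argument, rather than a citation of the main theorem, is needed.
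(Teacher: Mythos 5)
Your proposal is correct and follows the paper's proof essentially verbatim: the paper likewise observes that $\m=\tr_R(\m)$ is a regular reflexive trace ideal with $E=\m:\m=R:\m$ (so Setting \ref{3.1} applies with $I=\m$), identifies $\OCM(R,\m)$ with $\OCM_{\calP}(R)$, and reads off all three assertions from Theorem \ref{main} (and Corollary \ref{3.3} for part $(3)$). The only divergence is at the dictionary between $\m K=\m$ and the almost Gorenstein property, where the paper simply cites \cite[Theorem 3.11]{GMP} rather than re-deriving it; your inline argument is fine in spirit, but note that the definition asks for the existence of \emph{some} exact sequence $0\to R\to \rmK_R\to C\to 0$ with $C$ Ulrich rather than that the particular inclusion $R\subseteq K$ works, which is exactly the point the cited theorem takes care of.
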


\begin{proof}
As $R$ is not a DVR, we have $E = \m:\m = R:\m$ (\cite[Lemma 3.15 (3)]{GMP}). Thus $\tr_R(\m) = \m$. Since $\OCM(R, \m) = \OCM_{\calP}(R)$, the assertions follow from Theorem \ref{main} and the fact that $R$ is almost Gorenstein if and only if $\m K = \m$ (\cite[Theorem 3.11]{GMP}). 
\end{proof}

As we show in the proof of Theorem \ref{main} (3), we have $\fkc = K:S$ and $R:\fkc = S$. This shows $S$ is reflexive as an $R$-module. Thus, $\fkc = R:S$ is a regular reflexive trace ideal. %Next we consider the case where $I = \fkc$. 

\begin{cor}\label{3.5}
Suppose that $R$ admits a fractional canonical ideal $K$. Then the following assertions hold true. 
\begin{enumerate}[$(1)$]
\item The equality $\OCM(R, \fkc) = \CM(S)$ holds.
\item $S$ is a Gorenstein ring if and only if the equality $\OCM(S) = \OCM(R, \fkc)$ holds.
\end{enumerate}
\end{cor}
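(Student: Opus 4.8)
The plan is to deduce both parts by specializing the main theorem and Corollary~\ref{3.3} to the case $I = \fkc$. The key preliminary point, already recorded in the remark preceding the statement, is that $\fkc = R:S$ is a \emph{regular reflexive trace ideal} and satisfies $R:\fkc = S$. Since $\fkc$ is in particular a regular trace ideal, Proposition~\ref{1.10}(4) yields $\fkc:\fkc = R:\fkc = S$, so $\End_R(\fkc) = S$. Consequently the data $(R, \fkc, S)$ satisfies all the requirements imposed on $(R, I, A)$ in Setting~\ref{3.1}: here $S = R[K]$ is module-finite over $R$ because $K$ is a finitely generated $R$-module inside $\overline{R}$ with $K^{\ell} = S$ for $\ell \gg 0$, so $S$ is a one-dimensional Cohen--Macaulay ring with $\rmQ(S) = \rmQ(R)$ Gorenstein, and the categories $\OCM(S)$ and $\CM(S)$ are defined and the quoted results apply.

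For (1), I would apply Theorem~\ref{main}(3) with $I = \fkc$ and $A = S$. Its criterion is that $\OCM(R, \fkc) = \CM(S)$ holds if and only if $\fkc \subseteq \fkc$, equivalently $\fkc K = \fkc$; the inclusion $\fkc \subseteq \fkc$ is tautological (and $\fkc K = \fkc$ follows directly from $R \subseteq K \subseteq S$ together with $\fkc S = \fkc$), so the equality in (1) holds with no further hypothesis.

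For (2), I would argue as follows. If $S$ is Gorenstein, then $\OCM(S) = \CM(S)$ by the general fact recalled in the introduction that a birational module-finite extension of $R$ is Gorenstein precisely when this equality holds; combined with part (1) this gives $\OCM(S) = \OCM(R, \fkc)$. Conversely, if $\OCM(S) = \OCM(R, \fkc)$, then part (1) forces $\OCM(S) = \CM(S)$, hence $S$ is Gorenstein. Equivalently, one may route both implications through stability: Theorem~\ref{main}(2) with $I = \fkc$ says $\OCM(S) = \OCM(R, \fkc)$ if and only if $\fkc$ is stable, while Corollary~\ref{3.3} with $I = \fkc$, using the tautology $\fkc \subseteq \fkc$, says $\fkc$ is stable if and only if $S = \End_R(\fkc)$ is Gorenstein.

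There is no real obstacle here: the entire content is the observation that taking $I = \fkc$ makes the hypothesis ``$I \subseteq \fkc$'' of Theorem~\ref{main}(3) automatic. The only genuine verifications are that $\fkc$ is a regular reflexive trace ideal with $\End_R(\fkc) = S$ --- supplied by the remark preceding the corollary together with Proposition~\ref{1.10}(4) --- and the basic finiteness of $S = R[K]$ over $R$ needed for the ambient categories to make sense.
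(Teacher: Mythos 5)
Your proposal is correct and follows essentially the same route as the paper: verify that $\fkc = R:S$ is a regular reflexive trace ideal with $\fkc:\fkc = R:\fkc = S$, then apply Theorem~\ref{main}(3) with $I=\fkc$ (where the condition $I \subseteq \fkc$ is automatic) for part (1), and Corollary~\ref{3.3} for part (2). The only cosmetic difference is that the paper computes $\fkc:\fkc = (R:S):\fkc = R:\fkc S = R:\fkc = S$ directly rather than citing Proposition~\ref{1.10}(4), which is an equally valid shortcut.
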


\begin{proof}
As $(R:\fkc)\cdot\fkc = S\cdot\fkc = \fkc$, we get $\fkc = \tr_R(\fkc)$, i.e., $\fkc$ is a trace ideal of $R$. The ideal $\fkc$ is regular, and is reflexive as an $R$-module because $R:(R:\fkc) = R: S = \fkc$. Moreover we have $\fkc :\fkc = (R:S):\fkc = R:\fkc S = R:\fkc = S$. By Theorem \ref{main}, we conclude that $\OCM(R, \fkc) = \CM(S)$. This  proves the assertion $(1)$. The assertion $(2)$ follows from Corollary \ref{3.3}.
\end{proof}

%Theorem \ref{main} provides a generalization of the recent result of S. Dey (\cite[Theorem 1.1]{Dey}) as well. 
%Recall that a Noetherian ring $A$ is {\it generically Gorenstein}, if the local ring $A_{\p}$ is Gorenstein for every $\p \in \Min A$. Hence, if the ring $A$ satisfies $(S_1)$, then $A$ is generically Gorenstein if and only if $\rmQ(A)$ is a Gorenstein ring. 

\begin{rem}\label{3.7rem}
There is an alternative proof of Theorem \ref{main} using recent results in \cite{DMS, Dey}. Since we actually proved Theorem \ref{main} around five years ago, our proof does not use their results, but here we also record an alternative proof using their recent results.
%定理3.2を証明したのは実際には5年前であるので, 我々の証明は彼らの結果を用いないものであるけれども，ここでは, 最近の彼らの結果を用いた別証明も紹介したい。
Indeed, since $I=R:A$, $\rmQ(R)$ is Gorenstein, and $A \in \Ref(R)$, by \cite[Theorem 2.9]{DMS} (see also Proposition \ref{2.4}), we have $\OCM(R, I) = \OCM(R) \cap \CM(A)$; hence $\OCM(R, I) \subseteq \CM(A)$. Recently, Theorem \ref{main} (2) and $\OCM(A) \subseteq \OCM(R, I)$ have been independently proved by S. Dey using another approach; see \cite[Theorem 3.4, Proposition 2.5]{Dey}. In addition, the equality $\OCM(R, I) =\CM(A)$ holds if and only if $\CM(A) \subseteq \OCM(R)$. As $\OCM(R) = \Ref(R)$, by \cite[Theorem 5.5]{DMS}, the latter condition is equivalent to $K \subseteq A$, i.e., $S=R[K] \subseteq A$. This shows Theorem \ref{main} (3). 
\end{rem}

On the other hand, by using Theorem \ref{main}, we can provide an alternative proof of \cite[Theorem 1.1]{Dey} as well.

%一方で, Theorem \ref{main}を用いて, 次の結果の別証明を与えることもできます。

\begin{cor}[{\cite[Theorem 1.1]{Dey}}]
%Let $R$ be a generically Gorenstein Cohen-Macaulay local ring with $\dim R=1$ and 
Let $A$ be a birational module-finite extension of $R$. Set $I = R:A$. Then the following conditions are equivalent.
\begin{enumerate}[$(1)$]
\item $A$ is a reflexive $R$-module and the ideal $I$ is stable.
\item $A \cong \Hom_R(A, R)$ as an $A$-module.
\item The equality $\Ref(A) = \CM(A) \cap \Ref(R)$ holds. 
\end{enumerate}
\end{cor}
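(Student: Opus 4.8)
The plan is to prove the two equivalences $(1)\Leftrightarrow(2)$ and $(1)\Leftrightarrow(3)$ separately: the first by an elementary computation with fractional ideals, the second by invoking Theorem \ref{main}. A few preliminary observations come first. Since $A$ is a birational module-finite extension, it is a regular fractional ideal of $R$, hence so is $I=R:A$, and there is an $A$-linear identification $\Hom_R(A,R)\cong R:A=I$ (the natural isomorphism for fractional ideals, which is $A$-linear because any $R$-homomorphism $A\to R$ is given by multiplication by an element of $\rmQ(R)$). Thus condition $(2)$ says exactly that $I\cong A$ as $A$-modules. Moreover, if $A$ happens to be $R$-reflexive, then by the correspondence recalled in Section \ref{sec2} the ideal $I=R:A$ is a regular reflexive trace ideal with $A=\End_R(I)=I:I=R:I$, so that we are in the situation of Setting \ref{3.1}.

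For $(1)\Rightarrow(2)$: assuming $A$ reflexive and $I$ stable, write $I^2=aI$ with $a\in I$ a non-zerodivisor (possible since $I$ is regular); then $A=I:I=\tfrac{I}{a}$ and $I=aA$, so multiplication by $a$ is an $A$-isomorphism $A\xrightarrow{\ \sim\ }I$, which is $(2)$. For $(2)\Rightarrow(1)$: an $A$-isomorphism $A\xrightarrow{\ \sim\ }I$ sends $1$ to some $\beta\in I$, and since $I$ contains a non-zerodivisor, $\beta$ is a unit of $\rmQ(R)$ with $I=\beta A$. Hence $R:I=\beta^{-1}(R:A)=\beta^{-1}I=A$, which gives $R:(R:A)=R:I=A$, so $A$ is reflexive; and $I^2=\beta^2A=\beta I$ with $\beta\in I$, so $I$ is stable. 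This settles $(1)\Leftrightarrow(2)$.

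For $(1)\Leftrightarrow(3)$, I would first record the identity $\OCM(R,I)=\Ref(R)\cap\CM(A)$ as subcategories of $\mod R$: the inclusion ``$\subseteq$'' is Theorem \ref{main}(1) together with $\OCM(R,I)\subseteq\OCM(R)=\Ref(R)$, while ``$\supseteq$'' holds because any $M\in\CM(A)$ is an $A$-module, whence $\tr_R(M)\subseteq R:A=I$ by Lemma \ref{2.3} (one may instead quote \cite[Theorem 2.9]{DMS}, as in Remark \ref{3.7rem}). Recall also $\Ref(A)=\OCM(A)$. The key point is that each of $(1)$ and $(3)$ forces $A$ to be $R$-reflexive: in $(1)$ this is assumed, and in $(3)$ it follows from $A\in\Ref(A)=\CM(A)\cap\Ref(R)$. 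Once $A$ is reflexive we are in Setting \ref{3.1} with $I=R:A$, and condition $(3)$ then reads $\OCM(A)=\Ref(R)\cap\CM(A)=\OCM(R,I)$, which by Theorem \ref{main}(2) is equivalent to the stability of $I$. Hence $(3)$ is equivalent to ``$A$ reflexive and $I$ stable'', i.e. to $(1)$. I do not expect a genuine obstacle; the only point requiring care is not to presume $A$ reflexive at the outset — that reflexivity must first be extracted from $(2)$ and from $(3)$ before Theorem \ref{main} is applicable — and to verify that the identification $\Hom_R(A,R)\cong I$ is $A$-linear, so that $(2)$ really is the statement $A\cong I$ in $\mod A$ rather than merely in $\mod R$.
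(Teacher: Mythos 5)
Your proposal is correct and follows essentially the same route as the paper: the equivalence $(1)\Leftrightarrow(3)$ is reduced, after extracting the reflexivity of $A$ from each of $(1)$ and $(3)$ and identifying $\CM(A)\cap\Ref(R)$ with $\OCM(R,I)$, to Theorem \ref{main}~(2), and $(2)\Rightarrow(1)$ is the same computation with $I=\beta A$. The only (harmless) divergence is in $(1)\Rightarrow(2)$, where you argue directly that $I^2=aI$ gives $A=I:I=\frac{I}{a}$ and hence $I=aA\cong A$, while the paper instead deduces from $\Ref(A)=\OCM(R,I)$ that $I$ is $A$-reflexive and then quotes \cite[Lemma 2.9]{K}; your version is slightly more elementary and equally valid.
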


\begin{proof}
Since $\rmQ(R)$ is Gorenstein, the ideal $I$ is reflexive as an $R$-module, while $I$ is regular because $A$ is a birational module-finite extension. The equalities $I = IA = (R:A) A = \tr_R(A)$ show that  $I$ is a regular reflexive trace ideal of $R$. Note that $\CM(A) \cap \Ref(R) = \OCM(R, I)$. In addition, if $A \in \Ref(R)$, then $A = I:I$. % because $I:I = R:I = R:(R:A) = A$. 

$(1) \Leftrightarrow (3)$ May assume $A \in \Ref(R)$. The equivalence follows from Theorem \ref{main} (2). 

$(1) \Rightarrow (2)$ We have $A=I:I$. The stability of $I$ implies $\Ref(A) = \OCM(R, I)$. So $I$ is reflexive as an $A$-module. By \cite[Lemma 2.9]{K}, we conclude that $A \cong I \cong \Hom_R(A, R)$. 

$(2) \Rightarrow (1)$ As $A$ is self-dual, it is reflexive. We choose $a \in I$ such that $I = a A$. Then $I^2 = (aA)^2 = a (aA) = a I$. Therefore, the ideal $I$ is stable. 
\end{proof}

We denote by $\calY_R$ the set of reflexive birational module-finite extensions of $R$ and $\calZ_R$ the set of regular reflexive trace ideals of $R$, respectively.
For each $M \in \mod\, R$, let $\left[M\right]$ stand for the isomorphism class of $M$, and for a subcategory $\calX$ of $\mod\, R$, $\ind\calX$ denotes the set of isomorphism classes of indecomposable objects in $\calX$.

%\textcolor{red}{次の結果は, $\indOCM(R)$にしておく？右辺は, proper trace idealsのEnd環の$\indCM$になる。}
\begin{cor}
Suppose that $R$ is a Gorenstein local domain with $\dim R=1$. Then the equalities
$$
\indOCM(R) = \bigcup_{R \ne A \in \calY_R}\indCM(A) \cup \left\{[R]\right\} 
=  \bigcup_{I \in \calZ_R, \ \! I \ne R} \!\! \indCM(\End_R(I)) \cup \left\{[R]\right\}
$$
hold.
\end{cor}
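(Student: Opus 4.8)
The plan is to reduce the statement to Theorem~\ref{main}(3) together with the one-to-one correspondence between $\calZ_R$ and $\calY_R$ recalled in Section~\ref{sec2}. First I would record the simplifications that the hypotheses afford. Since $R$ is a one-dimensional Gorenstein local domain, $\rmQ(R)$ is a field and hence Gorenstein, so $R$ falls under Setting~\ref{3.1}; moreover $\rmK_R\cong R$, so $K:=R$ is a fractional canonical ideal of $R$, whence $S=R[K]=R$ and $\fkc=R:S=R$; and, $R$ being Gorenstein, $\OCM(R)=\Ref(R)=\CM(R)$, so that every finitely generated torsionfree $R$-module---in particular every nonzero ideal---is reflexive. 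The rightmost equality of the corollary is then immediate: the assignments $I\mapsto\End_R(I)$ and $A\mapsto R:A$ are mutually inverse bijections $\calZ_R\leftrightarrow\calY_R$ taking $R$ to $R$, hence they restrict to bijections between $\{I\in\calZ_R\mid I\ne R\}$ and $\{A\in\calY_R\mid A\ne R\}$ under which $A=\End_R(I)$, and this matches the two unions termwise.

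For the inclusion $\bigcup_{R\ne A\in\calY_R}\indCM(A)\cup\{[R]\}\subseteq\indOCM(R)$, note first that $[R]\in\indOCM(R)$ because $R$ is an indecomposable reflexive module. Now fix $A\in\calY_R$ with $A\ne R$ and put $I=R:A$; then $I\in\calZ_R$, $I\ne R$, and $\End_R(I)=A$. Because $\fkc=R$, the condition $I\subseteq\fkc$ of Theorem~\ref{main}(3) holds automatically, so $\OCM(R,I)=\CM(A)$; since $\OCM(R,I)\subseteq\OCM(R)=\Ref(R)$ by the definition of $\OCM(R,I)$, we get $\CM(A)\subseteq\Ref(R)$. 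If $N$ is an indecomposable object of $\CM(A)$, then $N$ is a nonzero $R$-torsionfree reflexive module, and by Lemma~\ref{2.2}(3) it remains indecomposable over $R$; hence $[N]\in\indOCM(R)$.

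For the reverse inclusion, let $M$ be an indecomposable reflexive $R$-module with $M\ne(0)$. If $M$ is free, then $M\cong R$ and $[M]=[R]$. Otherwise $M$ has no free summand, so $\fka:=\tr_R(M)\subseteq\m$ by \cite[Proposition 2.8 (iii)]{L}; moreover $\fka\ne(0)$ by \cite[Proposition 2.8 (vii)]{L}, $\fka$ is regular since $R$ is a domain, $\fka$ is a trace ideal by Proposition~\ref{1.10}, and $\fka$ is reflexive because it is a torsionfree module over the Gorenstein ring $R$. Thus $\fka\in\calZ_R$ with $\fka\ne R$, and the correspondence of Section~\ref{sec2} provides $A:=\End_R(\fka)\in\calY_R$ with $R:A=\fka$; in particular $A\ne R$, for otherwise $\fka=R:A=R$. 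Since $\fka\subseteq R=\fkc$, Theorem~\ref{main}(3) gives $\OCM(R,\fka)=\CM(A)$, and as $M\in\OCM(R)=\Ref(R)$ with $\tr_R(M)=\fka$ we conclude $M\in\OCM(R,\fka)=\CM(A)$; by Lemma~\ref{2.2}(3) the module $M$ is still indecomposable over $A$, so $[M]\in\indCM(A)$ with $A\ne R$. Putting the two inclusions together yields the first equality.

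The step demanding the most care is the verification in the previous paragraph that $\fka=\tr_R(M)$ is a \emph{reflexive} trace ideal: this is what places $\fka$ in $\calZ_R$, makes $A=\End_R(\fka)$ a member of $\calY_R$, and forces $R:A=\fka$ so that $M$ genuinely lies in $\CM(A)$. Here it costs nothing because over a Gorenstein ring torsionfree modules are reflexive, and it is exactly this---together with the equality $\fkc=R$, which makes the hypothesis of Theorem~\ref{main}(3) vacuous---that makes the Gorenstein hypothesis the natural framework for the statement; over a non-Gorenstein base the trace ideal of a reflexive module need not be reflexive, and the argument would stall at precisely this point.
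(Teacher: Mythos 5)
Your proof is correct and follows essentially the same route as the paper's: pass from an indecomposable reflexive $M$ to its trace ideal, handle the free case separately, apply Theorem~\ref{main} together with Lemma~\ref{2.2}(3), and finish with the bijection $\calZ_R\leftrightarrow\calY_R$. The only (harmless) variation is that you invoke Theorem~\ref{main}(3) via the observation $\fkc=R$ to get the equality $\OCM(R,I)=\CM(A)$ in both directions, where the paper uses just the containment from Theorem~\ref{main}(1) for the forward inclusion and deduces the converse directly from Lemma~\ref{2.2} using $\CM(R)=\OCM(R)$.
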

%$A$はreflexive birational module-finite extension of $R$にできる。%

\begin{proof}
Let $M \in \OCM(R)$. We assume $M \ne (0)$ and it is indecomposable as an $R$-module. Set $I = \tr_R(M)$. As $M$ is reflexive, we then have $I \ne (0)$. If $I = R$, then $R$ is a direct summand of $M$ (\cite[Proposition 2.8 (iii)]{L}), so that $M \cong R$ as an $R$-module. 
We are now assuming that $M \not\cong R$. Then $I \ne R$. Since $R$ is Gorenstein, the ideal $I$ is reflexive as an $R$-module. So $I = R:(R:I)$. 
By taking $A = I:I$, the ring $A$ is a birational finite extension of $R$, $A \ne R$, and $M \in \OCM(R, I)$. By Theorem \ref{main}, we have $M \in \CM(A)$. In addition, by Lemma \ref{2.2} (3), $M$ is indecomposable as an $A$-module. Hence
$$
\indOCM(R) = \indCM(R) \subseteq \left[\bigcup_{R \ne A \in \calY_R}\indCM(A)\right] \cup \left\{[R]\right\}. 
$$
The converse follows from Lemma \ref{2.2}. The one-to-one correspondence between $\calZ_R$ and $\calY_R$ by sending $I \in \calZ_R$ to $\End_R(I) = I:I \in \calY_R$ shows the remaining equality.
\end{proof}

%\textcolor{red}{Hence, if $R$ is of finite CM representation type, then so is $A$ for all $A \in \calY$. }

%\textcolor{red}{$R$の重複度が2である場合, 任意の$A$もGorensteinである。よって, $\CM(A) =\OCM(A)$が成り立つ。GTTを参照する。構造定理がある？}

%\textcolor{red}{reflexive trace idealsの例として, Ulrich idealsやgood idealsがある。また，module finite birational extensionのコロンがreflexive trace ideals。これらのEnd環はどうなるのか？また, birational module finite extとUlrich ideals, good idealsの一対一対応から何か言えないか？}
%%%%%%%%%%%%%%%%%%%%%%%%%%%%%%%%%%%%%%%%%%%%%%%%%%%%%%%%%%%%%%%%%%%%%%%%%%%%%%%%%%%%%%%%%%%%%%%%%%%%%%%%%%%%%%%%%%%%%%%%%%%%%%%%%%%%%%%%%%%%%%%%%%%%%%%%%%%%%%%%%%%%%%%%%%%%%%%%%%%%%%%%%%%%%%%%%%%%%%%%%%%%%%%%%%%%%%%%%%%%%%%%%%%%%%%%%%%%%%%%%%%%%%%%%%%%%%%%%%%%%%%%%%%%%%

\section{When is $\OCM(R)$ of finite type?}\label{sec4}

The purpose of this section is to explore the question of when the category $\OCM(R)$ is of finite type, i.e., $R$ has only finitely many isomorphism classes of indecomposable reflexive $R$-modules. To state our results, we first recall the notion of generalized Gorenstein rings.

%%%%%%%%%
A generalized Gorenstein ring, introduced by S. Goto and S. Kumashiro \cite{GK}, is one of the attempts to generalize 
the class of almost Gorenstein rings. Similarly, for an almost Gorenstein ring, the notion is defined by a certain specific embedding of the ring $R$ into the canonical module $\rmK_R$ so that the difference $\rmK_R/R$ should be tame and well-behaved; see \cite{GK} for the precise definition. 
Since we are focusing on one-dimensional rings, let us recall the definition of generalized Gorenstein rings, in particular, of dimension one.

\begin{defn}[{\cite[Definition 1.2]{GK}}]\label{4.5}
Let $(R, \m)$ be a Cohen-Macaulay local ring with $\dim R=1$ admitting a fractional canonical ideal $K$ of $R$. %, that is, $K$ is an $R$-submodule of $\overline{R}$ such that $R \subseteq K \subseteq \overline{R}$ and $K \cong \rmK_R$ as an $R$-module. 
We say that $R$ is a {\it generalized Gorenstein ring}, if either $R$ is Gorenstein, or $R$ is not a Gorenstein ring and $K/R$ is a free $R/\fka$-module, where $\fka = R:K$. 
\end{defn}

Remember that, by \cite[Theorem 3.11]{GMP}, $R$ is a non-Gorenstein almost Gorenstein ring if and only if $\fka = \m$. This indicates the generalized Gorenstein property is weaker than the almost Gorenstein property. We remark that $\rmQ(R)$ is Gorenstein, if either $R$ is a generalized Gorenstein ring (\cite[Lemma 3.1]{GTT}), or $R$ is reduced.

In this section, let $(R, \m)$ be a Cohen-Macaulay local ring with $\dim R=1$ admitting a fractional canonical ideal $K$. We set $S = R[K]$ and $\fkc = R:S$. For a Noetherian local ring $A$, let $\rme(A)$ be the Hilbert-Samuel multiplicity of $A$ and $v(A)$ the embedding dimension of $A$, respectively. 
We denote by $\ell_R(M)$ the length of an $R$-module $M$.  

Our answer to the above question can be stated as follows, where $|X|$ denotes the cardinality of a set $X$ and $X \ \dot\cup \ Y$ stands for the disjoint union of 
sets $X$ and $Y$.

\begin{thm}\label{4.1}
Suppose that $R$ is a generalized Gorenstein ring with minimal multiplicity. Then the equality 
$$
|\indOCM(R)| = \ell_R(R/\fkc) + |\indCM(S)|
$$
holds. Hence, $\OCM(R)$ is of finite type if and only if so is $\CM(S)$. 
\end{thm}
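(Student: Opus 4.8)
The plan is to establish the claimed equality
$$
|\indOCM(R)| = \ell_R(R/\fkc) + |\indCM(S)|
$$
by decomposing the indecomposable reflexive $R$-modules into those whose trace ideal lands inside $\fkc$ and those whose trace ideal does not, then identifying each piece. First I would recall from Setting~\ref{3.1} and Corollary~\ref{3.5} that $\fkc = R:S$ is a regular reflexive trace ideal with $S = \End_R(\fkc) = \fkc:\fkc$, and that by Theorem~\ref{main} applied with $I = \fkc$ we have $\OCM(R, \fkc) = \CM(S)$. Combined with Lemma~\ref{2.2}(3), this gives a bijection between $\indCM(S)$ and the isomorphism classes of indecomposable reflexive $R$-modules $M$ with $\tr_R(M) \subseteq \fkc$; this accounts for the $|\indCM(S)|$ term.

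Next I would handle the indecomposable $M \in \OCM(R)$ with $\tr_R(M) \not\subseteq \fkc$. This is where the hypotheses that $R$ is a generalized Gorenstein ring with minimal multiplicity enter decisively. The minimal multiplicity of $R$ means $\m$ is stable, so $\m^2 = a\m$ for some $a \in \m$ and $\End_R(\m) = \m/a = R[\m/a]$; the generalized Gorenstein condition says $K/R$ is free over $R/\fka$ with $\fka = R:K$. Under these assumptions one expects a very rigid structure: I would argue that the regular reflexive trace ideals $I$ of $R$ with $I \not\subseteq \fkc$, equivalently the reflexive birational extensions $A = \End_R(I)$ with $A \not\supseteq S$, are tightly controlled — in fact I expect each such $M$ to be forced (up to isomorphism) to be isomorphic to a cyclic module of the form $R/\b$ for a suitable ideal, or more precisely that the indecomposable reflexive modules with trace ideal $\m$-primary but not inside $\fkc$ are in bijection with the length-$\ell_R(R/\fkc)$ worth of "extra" data measured by the chain $R \subsetneq S$, $\fkc \subsetneq R$. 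Concretely, I would use the filtration $\fkc = \fkc_0 \subseteq \fkc_1 \subseteq \cdots \subseteq \fkc_t = R$ of ideals between $\fkc$ and $R$ (of length $\ell_R(R/\fkc)$) and show that the reflexive modules "caught between consecutive steps" contribute exactly one new indecomposable each, which gives the additive term $\ell_R(R/\fkc)$.

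The main obstacle will be the second part: pinning down exactly the indecomposable reflexive modules whose trace is not contained in $\fkc$ and proving there are precisely $\ell_R(R/\fkc)$ of them with no overlap with the $\CM(S)$ count. The disjointness should follow because $\tr_R(M) \subseteq \fkc$ is equivalent (by Proposition~\ref{2.4} and the identification $\fkc = R:S$) to $M$ being an $S$-module, so the two families genuinely partition $\indOCM(R)$; but the enumeration of the non-$S$-module side requires the generalized Gorenstein + minimal multiplicity structure to prevent the "wild" behaviour that would otherwise occur. I would likely reduce to the case where $R/\m$ is infinite (harmless by a standard flat base change to $R[X]_{\m R[X]}$, which preserves all the relevant invariants including $\ell_R(R/\fkc)$ and both categories up to the finite-type question), then exploit that $S = R[K]$ is itself a generalized Gorenstein ring with a short canonical filtration, and that $\CM$ over the intermediate rings in the chain $R \subseteq \End_R(\m) \subseteq \cdots \subseteq S$ grows in a controlled, countably-indexed way. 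The final sentence ``$\OCM(R)$ is of finite type iff so is $\CM(S)$'' is then immediate from the displayed equality, since $\ell_R(R/\fkc)$ is always finite.
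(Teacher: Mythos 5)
Your first step is sound and agrees with the paper: by Corollary \ref{3.5} one has $\OCM(R,\fkc)=\CM(S)$, and together with Lemma \ref{2.2}(3) this identifies the indecomposable reflexive $R$-modules $M$ with $\tr_R(M)\subseteq\fkc$ with $\indCM(S)$; the disjointness of the two families is also fine. The genuine gap is the second half, which you yourself flag as ``the main obstacle'': you never prove that there are exactly $\ell_R(R/\fkc)$ indecomposables with $\tr_R(M)\not\subseteq\fkc$, and the mechanism you sketch does not work. An arbitrary chain of ideals $\fkc=\fkc_0\subseteq\cdots\subseteq\fkc_t=R$ consists of ideals that need not be trace ideals, need not be reflexive, and comes with no construction attaching one new indecomposable reflexive module to each composition factor; nothing in the generalized Gorenstein plus minimal multiplicity hypotheses turns such a filtration into the required bijection. (Also, reflexive modules here are torsionfree, so no nonzero cyclic module $R/\fkb$ with $\fkb\ne(0)$ can occur.)

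The paper's argument instead runs an induction along the iterated blow-ups at the maximal ideal: set $R_0=R$ and $R_n=R_{n-1}^{J(R_{n-1})}=J(R_{n-1}):J(R_{n-1})$. The key external input is \cite[Theorem 3.4]{CCGT}: for a generalized Gorenstein ring with minimal multiplicity each $R_n$ ($0\le n<N$) is again a one-dimensional Cohen--Macaulay \emph{local} ring with $v(R_n)=\rme(R_n)=\rme(R)$, and the chain reaches the Gorenstein ring $S=R[K]$ in exactly $N=\ell_R(R/\fkc)$ steps. Since each $J(R_{n-1})$ is then stable, Corollary \ref{3.4}(1) gives $\OCM(R_n)=\OCM_{\calP}(R_{n-1})$, whence $\indOCM(R_{n-1})=\indOCM(R_n)\ \dot\cup\ \{[R_{n-1}]\}$; iterating yields $\indOCM(R)=\indCM(S)\ \dot\cup\ \{[R],[R_1],\dots,[R_{N-1}]\}$. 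Note that the ``extra'' indecomposables are the rings $R_n$ themselves, and that the steps $\ell_R(R_n/R_{n-1})$ are in general greater than one (for $R=k[[t^3,t^7,t^8]]$ one has $\ell_R(S/R)=4$ while $N=\ell_R(R/\fkc)=2$), so the count $N=\ell_R(R/\fkc)$ is a nontrivial structural theorem about these rings rather than a formal consequence of any filtration of $R/\fkc$ or of $S/R$. Without this input your proposal does not close.
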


\begin{proof}
Note that $R$ is Gorenstein if and only if $R=S$ (\cite[Theorem 3.7]{GMP}), or equivalently, $\OCM(R) = \CM(R)$. Without loss of generality, we may assume $R$ is not a Gorenstein ring. Thus $v(R) = \rme(R) \ge 3$. 
For each integer $n \ge 0$, we define recursively
$$
R_n = 
\begin{cases}
\  R &  (n = 0) \\
\  R_{n-1}^{J(R_{n-1})}  & (n \geq 1)
\end{cases}
$$
where $J(R_{n-1})$ stands for the Jacobson radical of the ring $R_{n-1}$.
By \cite[Theorem 3.4]{CCGT}, the ring $S =R[K]$ is Gorenstein, $S=R_N$, and $R_n$ is a one-dimensional Cohen-Macaulay local ring with $v(R_n) = \rme(R_n) = \rme(R)$ for all $0 \le n < N$, where $N = \ell_R(R/\fkc)>0$. Since $R_1 = \m:\m$ and $\m$ is stable, we have $\OCM(R_1) = \OCM_{\calP}(R)$ by Corollary \ref{3.4} (1). Then
$$
\indOCM(R) = \indOCM(R_1) \ \dot\cup \ \{[R]\}.
$$
Indeed, let $M \in \OCM(R_1)$ such that $M$ is indecomposable as an $R_1$-module. Then $M \in \OCM_{\calP}(R)\subseteq \OCM(R)$ and $M$ is torsionfree as an $R$-module. So, $M$ is $R$-indecomposable by Lemma \ref{2.2} (3). Because $\OCM(R_1) = \OCM_{\calP}(R)$, we have $\indOCM(R_1) \cap \{[R]\} = \emptyset$. 
 On the other hand, let $M \in \OCM(R)$ such that $M$ is indecomposable as an $R$-module. This shows $M \cong R$ in $\mod\, R$ if $R$ is a direct summand of $M$. Otherwise, if $R$ is not a direct summand of $M$, we see that $M \in \OCM_{\calP}(R) = \OCM(R_1)$. Again, by Lemma \ref{2.2} (3), $M$ is indecomposable as an $R_1$-module. Hence, $\indOCM(R) = \indOCM(R_1) \ \dot\cup \ \{[R]\}$. Next, because $R_2 = J(R_1):J(R_1)$ and $J(R_1)$ is stable, we have $\OCM(R_2) = \OCM_{\calP}(R_1)$. Similarly as above, we obtain 
$$
\indOCM(R_1) = \indOCM(R_2) \ \dot\cup \ \{[R_1]\}.
$$
Repeating the same process for $R_n$ recursively induces the equality below
$$
\indOCM(R) = \indCM(S) \ \dot\cup \ \{[R], [R_1], \ldots, [R_{N-1}]\}
$$
because $S$ is a Gorenstein ring and $S=R_N$. Hence we have the equalities
$$
|\indOCM(R)| = N + |\indCM(S)| = \ell_R(R/\fkc) + |\indCM(S)|
$$
which complete the proof.   
\end{proof}

It is known by \cite{GK} that every Cohen-Macaulay local ring with multiplicity at most $3$ is a generalized Gorenstein ring. Hence we have the following.  

\begin{cor}\label{4.2}
Suppose that $\rme(R)=v(R)=3$. Then the equality 
$$
|\indOCM(R)| = \ell_R(R/\fkc) + |\indCM(S)|
$$
holds. 
\end{cor}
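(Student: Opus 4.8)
The plan is to read Corollary \ref{4.2} off from Theorem \ref{4.1}: the entire content of the argument is to check that the two hypotheses of that theorem — that $R$ is a generalized Gorenstein ring and that $R$ has minimal multiplicity — follow from the single assumption $\rme(R)=v(R)=3$. One of these is purely formal. Since $\dim R=1$, the ring $R$ has minimal multiplicity by definition exactly when its Hilbert--Samuel multiplicity equals its embedding dimension, i.e. when $\rme(R)=v(R)$, and this is part of the hypothesis.

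For the other hypothesis, I would recall that throughout Section \ref{sec4} the ring $R$ is assumed to admit a fractional canonical ideal $K$, so that Definition \ref{4.5} of a generalized Gorenstein ring applies to $R$ in the first place, and that it has already been recorded above, on the authority of \cite{GK}, that every Cohen--Macaulay local ring of multiplicity at most $3$ is a generalized Gorenstein ring. Since $\rme(R)=3$, the ring $R$ is therefore generalized Gorenstein. Both hypotheses of Theorem \ref{4.1} being in force, that theorem yields $|\indOCM(R)| = \ell_R(R/\fkc) + |\indCM(S)|$, which is precisely the assertion.

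There is no genuine obstacle here; the only point deserving a moment's attention is whether the cited fact from \cite{GK} is available in exactly the generality at hand, which is guaranteed by the standing hypotheses of this section. It may also be worth remarking, although it is not needed for the proof, that the Gorenstein alternative in Definition \ref{4.5} cannot occur under $\rme(R)=v(R)=3$: a one-dimensional Cohen--Macaulay local ring that is Gorenstein and of minimal multiplicity has $\rme(R)\le 2$ (kill a minimal reduction of $\m$ to reduce to an Artinian Gorenstein ring with $\m^2=0$, whose socle is all of $\m$ and hence one-dimensional), so in the present situation $R$ is automatically non-Gorenstein and the recursive chain $R=R_0 \subseteq R_1 \subseteq \cdots \subseteq R_N=S$ appearing in the proof of Theorem \ref{4.1} is genuinely nontrivial.
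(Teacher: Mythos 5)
Your proposal is correct and is exactly the paper's (implicit) argument: the paper derives Corollary \ref{4.2} from Theorem \ref{4.1} by invoking the fact from \cite{GK} that multiplicity at most $3$ forces the generalized Gorenstein property, while $\rme(R)=v(R)$ is the definition of minimal multiplicity in dimension one. Your closing observation that $R$ is automatically non-Gorenstein here is accurate but, as you note, not needed.
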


T. Kobayashi proved in \cite[Example 2.16]{K} that $\OCM(R)$ is of finite type for the semigroup ring $R=k[[t^3, t^7, t^8]]$ over a field $k$. We explore this example in more detail below. 

\begin{ex}
Let $V = k[[t]]$ be the formal power series ring over a field $k$ and set $R=k[[t^3, t^7, t^8]]$. Then $R$ is a generalized Gorenstein ring with minimal multiplicity. We set $K = R + Rt$ which is a fractional canonical ideal of $R$. Therefore $S=R[K] = V$ and $\fkc = R:V = t^{6}V$. Thus $\ell_R(R/\fkc) = \ell_R(V/\fkc) - \ell_R(V/R) = 6-4 = 2$. As $S=V$ is a DVR, we get $|\indCM(S)| = 1$. This shows $|\indOCM(R)| = 3$. Hence $\OCM(R)$ is of finite type. 
\end{ex}

Although we assume the ring has minimal multiplicity, the next corollary provides a formula regarding the cardinalities of $\indOCM(R)$ and $\indCM(\End_R(\m))$.  
%\textcolor{red}{次のCorollaryは, I-Ulrich moduelsのProp 7.7とKobayashiのCor 1.3をrecoverしている。}

\begin{cor}[{cf. \cite[Proposition 7.7]{DMS}, \cite[Corollary 1.4]{K}}]\label{4.3}
Suppose that $R$ is a non-Gorenstein almost Gorenstein ring with minimal multiplicity. Then the equality 
$$
|\indOCM(R)| = 1 +  |\indCM(\End_R(\m))| 
$$
holds. 
\end{cor}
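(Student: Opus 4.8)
The plan is to specialize Theorem \ref{4.1} to the almost Gorenstein case. By \cite[Theorem 3.11]{GMP}, a non-Gorenstein Cohen-Macaulay local ring $R$ of dimension one admitting a fractional canonical ideal $K$ is almost Gorenstein if and only if $\m K = \m$, and this is equivalent to $\fka = R : K = \m$. In particular $R$ is trivially a generalized Gorenstein ring (the module $K/R$ is then killed by $\m$, hence is an $R/\m$-vector space, which is free over $R/\fka = R/\m$). Since $R$ also has minimal multiplicity by hypothesis, Theorem \ref{4.1} applies verbatim and gives
$$
|\indOCM(R)| = \ell_R(R/\fkc) + |\indCM(S)|,
$$
where as usual $S = R[K]$ and $\fkc = R : S$.

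The key computation is then to identify the two terms on the right-hand side in the almost Gorenstein case. First I would show $S = \m : \m = \End_R(\m)$: since $\m$ is a regular reflexive trace ideal (as $R$ is not a DVR, by \cite[Lemma 3.15]{GMP}) and $\m$ is stable (minimal multiplicity), Corollary \ref{3.4}(2) together with the almost Gorenstein hypothesis yields $\OCM_{\calP}(R) = \CM(E)$ with $E = \m : \m$; more directly, in the recursive construction of Theorem \ref{4.1} one has $R_1 = \m : \m$, and since $R$ is almost Gorenstein with $\fkc = R : S$ the chain $R = R_0 \subsetneq R_1 = S$ already terminates at step $N = 1$ by \cite[Theorem 3.4]{CCGT}. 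Hence $N = \ell_R(R/\fkc) = 1$ and $S = R_1 = \End_R(\m)$. Substituting $\ell_R(R/\fkc) = 1$ and $|\indCM(S)| = |\indCM(\End_R(\m))|$ into the displayed formula gives
$$
|\indOCM(R)| = 1 + |\indCM(\End_R(\m))|,
$$
as claimed.

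The main obstacle is verifying that $N = \ell_R(R/\fkc) = 1$ precisely under the non-Gorenstein almost Gorenstein hypothesis, i.e.\ that the tower $R_n$ of Theorem \ref{4.1} stabilizes after a single step. This is where one must invoke that $R$ almost Gorenstein with minimal multiplicity forces $S = R[K] = R_1 = \m : \m$ to already be Gorenstein; equivalently, that $\fkc = R : S$ satisfies $\ell_R(R/\fkc) = 1$. Both facts are contained in \cite[Theorem 3.4]{CCGT} and \cite[Theorem 3.7]{GMP} (together with Corollary \ref{3.3}: $A = \End_R(\m)$ is Gorenstein iff $\m$ is stable and $\m \subseteq \fkc$, and in the almost Gorenstein case $\m \subseteq \fkc$ amounts to $\m K = \m$). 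Once this structural point is in place, the remainder is a direct substitution into Theorem \ref{4.1}, and no further calculation is needed.
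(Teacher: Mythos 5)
Your proof is correct and follows essentially the same route as the paper: both reduce to Theorem \ref{4.1} after identifying $S=R[K]=\m:\m$ and $\ell_R(R/\fkc)=1$. The only (cosmetic) difference is that you derive $\fkc=\m$ internally from Theorem \ref{main}(3) and the tower of \cite[Theorem 3.4]{CCGT}, whereas the paper simply quotes \cite[Lemma 3.5, Theorem 3.16]{GMP} for $S=\m:\m$ and $\fkc=\m$.
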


\begin{proof}
Since $R$ is non-Gorenstein almost Gorenstein ring, we have $S = R[K] = \m:\m$ and $\fkc = \m$ (\cite[Lemma 3.5, Theorem 3.16]{GMP}). The assertion follows from Theorem \ref{4.1}. 
\end{proof}

We try to describe all the members of $\indOCM(R)$ for certain specific almost Gorenstein rings. Note that, if $\m \overline{R} \subseteq R$, then $R$ is an almost Gorenstein ring (\cite[Theorems 3.11]{GMP}). 
 We then have the following.

\begin{prop}\label{4.4}
Let $(R, \m)$ be a Cohen-Macaulay local ring with $\dim R=1$. Suppose that the normalization $\overline{R}$ is a DVR and is a module-finite extension of $R$. If $\m \overline{R} \subseteq R$, then the equality $\indOCM(R) = \left\{[R], [\overline{R}]\right\}$ holds. 
\end{prop}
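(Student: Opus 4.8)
The plan is to reduce everything to the previous results on almost Gorenstein rings with minimal multiplicity, namely Theorem~\ref{4.1} and Corollary~\ref{4.3}, by first checking that the hypotheses put us in that situation and then computing the two quantities $\ell_R(R/\fkc)$ and $|\indCM(S)|$ explicitly. First I would observe that since $\overline{R}$ is a DVR, $\rmQ(R) = \rmQ(\overline{R})$ is a field, hence Gorenstein, so Setting~\ref{3.1} applies and $R$ admits a fractional canonical ideal after passing to an infinite residue field (or one argues directly with $K$). Next, $\m\overline{R} \subseteq R$ forces $\m\overline{R} = \m$ or $\m\overline{R}$ a principal ideal of $\overline{R}$ contained in $\m$; in either case $\m$ is stable because $\overline{R}$ is a DVR: writing $\overline{R} = V$ with uniformizer $t$, the ideal $\m V$ is $t^cV$ for some $c>0$, and $\m^2 \subseteq \m V \cap R$, which together with minimality considerations gives $\m^2 = a\m$ for a suitable $a \in \m$. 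Thus $R$ has minimal multiplicity and, by \cite[Theorems 3.11]{GMP}, $R$ is almost Gorenstein. If $R$ is in fact Gorenstein, then since $R$ has minimal multiplicity and $\overline{R}$ is a DVR one checks $R = \overline{R}$ or $R$ is a hypersurface of multiplicity $2$, and in all these cases $\OCM(R) = \CM(R) = \mathrm{add}_R R$, so $\indOCM(R) = \{[R]\}$; but under $\m\overline{R}\subseteq R$ with $R \neq \overline{R}$ one sees $\rme(R) \geq v(R) \geq 2$ and the almost Gorenstein condition with $\overline{R}$ a DVR pins down the non-Gorenstein case, so I will assume $R$ is non-Gorenstein almost Gorenstein.

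Now I would identify $S$ and $\fkc$. Since $R$ is non-Gorenstein almost Gorenstein, $S = R[K] = \m:\m = \End_R(\m)$ and $\fkc = \m$ by \cite[Lemma 3.5, Theorem 3.16]{GMP}. Because $\m\overline{R} \subseteq R$ we get $\overline{R}\cdot\m \subseteq R$, so $\overline{R} \subseteq R:\m = \m:\m = S \subseteq \overline{R}$, whence $S = \overline{R}$, which is a DVR. Therefore $\CM(S) = \CM(\overline{R}) = \mathrm{add}_{\overline{R}}\,\overline{R}$ since over a DVR every torsionfree finitely generated module is free, giving $|\indCM(S)| = 1$, i.e.\ $\indCM(S) = \{[\overline{R}]\}$. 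Feeding $S = \overline{R}$ into Corollary~\ref{4.3} yields $|\indOCM(R)| = 1 + |\indCM(\End_R(\m))| = 1 + |\indCM(S)| = 2$. To pin down which two classes these are, I use the disjoint-union description established in the proof of Theorem~\ref{4.1}: since $N = \ell_R(R/\fkc) = \ell_R(R/\m) = 1$, the chain collapses to
$$
\indOCM(R) = \indCM(S)\ \dot\cup\ \{[R]\} = \{[\overline{R}], [R]\}.
$$

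The main obstacle I anticipate is not any single hard computation but rather the bookkeeping needed to confirm that we really are in the non-Gorenstein almost Gorenstein minimal-multiplicity case and that $S = \overline{R}$: one must rule out the degenerate possibilities ($R$ a DVR, $R$ Gorenstein of multiplicity $2$) cleanly, and one must verify $\m$ is stable from $\m\overline{R}\subseteq R$ without circularity. The stability of $\m$ is the crux — everything downstream (minimal multiplicity, applicability of Theorem~\ref{4.1}, the identification $\fkc = \m$, the length count $N=1$) rests on it. Once stability and the identification $S = R:\m = \overline{R}$ are in hand, the conclusion $\indOCM(R) = \{[R],[\overline{R}]\}$ is immediate from the previous theorems, with no further work.
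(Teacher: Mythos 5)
Your overall strategy (reduce to Theorem \ref{4.1} / Corollary \ref{4.3} by showing $\m$ is stable, $R$ is almost Gorenstein, $S=\m:\m=\overline{R}$ and $\fkc=\m$) does work in the non-Gorenstein case and is a legitimate alternative to the paper's argument, which instead applies Corollary \ref{3.4} directly to $E=\m:\m=\overline{R}$. But there is a genuine gap in your reduction: you claim that the hypotheses together with $R\neq\overline{R}$ ``pin down the non-Gorenstein case,'' and that in the Gorenstein case $\OCM(R)=\CM(R)=\mathrm{add}_R\,R$, so $\indOCM(R)=\{[R]\}$. Both claims are false. Take $R=k[[t^2,t^3]]$ (the case $a=2$ of the Example immediately following the proposition): $\overline{R}=k[[t]]$ is a DVR, $\m\overline{R}=t^2k[[t]]=\m\subseteq R$, $R$ is Gorenstein and not a DVR, and $\indOCM(R)=\indCM(R)=\{[R],[\overline{R}]\}$ has two elements, not one (the equality $\CM(R)=\mathrm{add}_R\,R$ would force $R$ to be regular). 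So the Gorenstein case is not degenerate; it satisfies all hypotheses and must yield the same two-element answer, yet Corollary \ref{4.3} presupposes $R$ non-Gorenstein and Theorem \ref{4.1} in the Gorenstein case reads $|\indOCM(R)|=0+|\indCM(R)|$, which is vacuous. The uniform fix is the paper's route: from $\m\overline{R}\subseteq R$ and $R\neq\overline{R}$ one gets that $\m\overline{R}$ is a proper ideal of $R$ containing $\m$, hence $\m\overline{R}=\m$, hence $\overline{R}=\m:\m$ and $\m=b\overline{R}$ for a single element $b$, so $\m^2=b\m$; then Corollary \ref{3.4} gives $\OCM(\overline{R})=\OCM_{\calP}(R)=\CM(\overline{R})$ (stability for the first equality, almost Gorensteinness for the second --- Gorenstein rings are in particular almost Gorenstein, so no case split is needed), and $\indOCM(R)=\{[R]\}\ \dot\cup\ \indCM(\overline{R})=\left\{[R],[\overline{R}]\right\}$.

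A second, smaller gap: Theorem \ref{4.1}, Corollary \ref{4.3} and Corollary \ref{3.4} all presuppose a fractional canonical ideal $K$, and ``passing to an infinite residue field'' is not a step you may take for free. The paper secures $K$ directly: since $\overline{R}$ is module-finite over $R$, the completion $\widehat{R}$ is reduced, so $R$ contains a canonical ideal $I$ by Herzog--Kunz, and $I\overline{R}=a\overline{R}$ is principal because $\overline{R}$ is a DVR, whence $(a)$ is a reduction of $I$ and $K=I/a$ serves. Your stability argument via ``minimality considerations'' should likewise be replaced by the explicit identification $\m=\m\overline{R}=b\overline{R}$ above.
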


\begin{proof}
We set $V=\overline{R}$. May assume $R$ is not a DVR, i.e., $R \ne V$. Hence $\m V= \m$ and $V = \m:\m$.  Note that $\rmQ(\widehat{R})$ is a Gorenstein ring, where $\widehat{R}$ denotes the $\m$-adic completion of $R$. Thus, by \cite[Satz 6.21]{HK} (see also \cite[Proposition 2.7]{GMP}), $R$ contains a canonical ideal $I$, i.e., $I ~(\ne R)$ is an ideal of $R$ and $I \cong \rmK_R$ as an $R$-module.  By choosing $a \in I$ with $IV=aV$, the ideal $(a)$ is a reduction of $I$. Consider 
$$
K = \frac{I}{a} = \left\{\frac{x}{a} ~\middle|~ x \in I\right\} \subseteq \rmQ(A)
$$
which is a fractional canonical ideal of $R$. We choose $b \in \m$ satisfying $\m V = b V$. Then $\m= bV$, so that $\m^2 = (b V)^2 = b(bV) = b\m$. By Corollary \ref{3.4}, we obtain $\OCM(V) = \OCM_{\calP}(R) = \CM(V)$. This implies $\indOCM(R) = \left\{[R], [V]\right\}$ as desired. 
\end{proof}

Let us note some examples in order to illustrate Proposition \ref{4.4}.

\begin{ex}
Let $A$ be a regular local ring with $n = \dim A \ge 2$. Let $X_1,X_2, \ldots, X_n$ be a regular system of parameters of $A$ and set $P_i =(X_j \mid 1 \le j \le n, ~j \ne i)$ for each $1 \le i \le n$. We set
$R = A/ \bigcap_{i=1}^n P_i$. Then the equality $\indOCM(R) = \left\{[R], [\overline{R}]\right\}$ holds. 
\end{ex}

\begin{proof}
Let $x_i$ denote the image of $X_i$ in $R$. We consider $\fkp_i =(x_j \mid 1 \le j \le n,~j \ne i)$ and $V = \prod_{i=1}^n(R/\fkp_i)$. The homomorphism
$\varphi : R \to V, ~a \mapsto (\overline{a}, \overline{a}, \ldots, \overline{a})$
is injective, $V = \overline{R}$, and $\m V = \m$. Hence $\indOCM(R) = \left\{[R], [V]\right\}$. 
%and $\mu_A(B) = n$, $\ell_A(B/A) = n-1$. Let $\mathbf{e}_j =(0, \ldots, 0, \overset{j}{\check{1}}, 0, \ldots, 0)$ for  $1 \le j \le n$ and $\mathbf{e} = {\sum_{j=1}^n \mathbf{e}_j}$. Then $B=A\mathbf{e} +  \sum_{j=1}^{n-1}A\mathbf{e}_j$. 
\end{proof}

For a commutative ring $R$ containing a field of positive characteristic $p >0$,  we denote $R$ by $T$ when we regard $R$ as an $R$-algebra via the Frobenius map $F : R \to R,\  a \mapsto a^p.$ Note that $R$ is a reduced ring if and only if $F : R \to T$ is injective. We say that the ring $R$ is {\it $F$-pure} if for each $R$-module $M$ the homomorphism $M \to T \otimes_R M, \  m \mapsto 1 \otimes m$ is injective.

\begin{ex}
Let $(R, \m)$ be a Cohen-Macaulay local ring with $\dim R=1$. Suppose that $R$ is $F$-pure and $\overline{R}$ is a DVR. Then the equality $\indOCM(R) = \left\{[R], [\overline{R}]\right\}$ holds. 
\end{ex}

\begin{proof}
Since the $\m$-adic completion $\widehat{R}$ of $R$ remains $F$-purity (\cite[Lemma 3.26]{Hashimoto}), $\widehat{R}$ is a reduced ring, so that the normalization $\overline{R}$ is a module-finite extension of $R$, whence $\ell_R(\overline{R}/R) < \infty$, that is $\m^{\ell} {\cdot}(\overline{R}/R) =(0)$ for some $\ell \gg 0$. 
The Frobenius map naturally induces the homomorphism $f :\operatorname{Q}(R)/R \to \operatorname{Q}(R)/R, \ \overline{a} \mapsto \overline{a^p}$ of $R$-modules, where $\overline{a}$ (resp. $\overline{a^p}$) denotes, for each $a \in \operatorname{Q}(R)$, the image of $a$ in $\operatorname{Q}(R)/R$ (resp. the image of $a^p$ in $\operatorname{Q}(R)/R$). The homomorphism $f :\operatorname{Q}(R)/R \to \operatorname{Q}(R)/R$ is injective, once $R$ is an $F$-pure ring.
Hence, $f^{\ell}(\m {\cdot}(\overline{R}/R)) =(0)$ for all $\ell \gg 0$. 
Therefore, the injectivity of the homomorphism $f :\operatorname{Q}(R)/R \to \operatorname{Q}(R)/R$ guarantees that $\m {\cdot}(\overline{R}/R)=(0)$. 
\end{proof}

Computing the ring $S=R[K]$ and the length $\ell_R(R/\fkc)$ is not so difficult, especially for numerical semigroup rings. We summarize some terminology.  

Let $0 < a_1, a_2, \ldots, a_\ell \in \Bbb Z~(\ell >0)$ be integers such that $\mathrm{GCD}~(a_1, a_2, \ldots, a_\ell)=1$. Set 
$$
H = \left<a_1, a_2, \ldots, a_\ell\right>=\left\{\sum_{i=1}^\ell c_ia_i \mid 0 \le c_i \in \Bbb Z~\text{for~all}~1 \le i \le \ell \right\}
$$
and call it {\it the numerical semigroup generated by the numbers $\{a_i\}_{1 \le i \le \ell}$}. Let $V = k[[t]]$ be the formal power series ring over a field $k$. We set 
$$
R = k[[H]] = k[[t^{a_1}, t^{a_2}, \ldots, t^{a_\ell}]]
$$ 
in $V$ and call it {\it the semigroup ring of $H$ over $k$}. The ring  $R$ is a one-dimensional Cohen-Macaulay local domain with $\overline{R} = V$ and $\m = (t^{a_1},t^{a_2}, \ldots, t^{a_\ell} )$, the maximal ideal. 
Let 
$$
\rmc(H) = \min \{n \in \Bbb Z \mid m \in H~\text{for~all}~m \in \Bbb Z~\text{such~that~}m \ge n\}
$$
be {\it the conductor of $H$} and set $\rmf(H) = \rmc(H) -1$. Hence, $\rmf(H) = \max ~(\Bbb Z \setminus H)$, which is called {\it the Frobenius number of $H$}. Let $$\mathrm{PF}(H) = \{n \in \Bbb Z \setminus H \mid n + a_i \in H~\text{for~all}~1 \le  i \le \ell\}$$ denote the set of pseudo-Frobenius numbers of $H$. Therefore, $\rmf(H)$ coincides with the $\rma$-invariant of the graded $k$-algebra $k[t^{a_1}, t^{a_2}, \ldots, t^{a_\ell}]$ and $| \mathrm{PF}(H)| = \rmr(R)$ (\cite[Example (2.1.9), Definition (3.1.4)]{GW}), where $\rmr(R)$ denotes the Cohen-Macaulay type of $R$.  We set  $f = \rmf(H)$ and 
$$
K = \sum_{c \in \mathrm{PF}(H)}Rt^{f-c}
$$
in $V$. Then $K$ is a fractional ideal of $R$ such that $R \subseteq K \subseteq \overline{R}$ and 
$$
K \cong \rmK_R = \sum_{c \in \mathrm{PF}(H)}Rt^{-c}
$$
as an $R$-module (\cite[Example (2.1.9)]{GW}). Hence $K$ is a fractional canonical ideal of $R$. %We set $S = R[K]$. %Notice that  $t^f \not\in K$ but $\m t^f \subseteq R$, whence $K:\m = K + Rt^f$.
The example below satisfies $\m V \subseteq R$. 

\begin{ex}
Let $R = k[[t^{a},t^{a+1},  \ldots, t^{2a-1}]]$~$(a \ge 2)$. Then $\indOCM(R) = \left\{[R], [\overline{R}]\right\}$. 
\end{ex}

Recall that a Cohen-Macaulay local ring $A$ is said to be {\it of finite CM-representation type}, if $\CM(A)$ is of finite type, i.e., there are only a finite number of isomorphism classes of indecomposable MCM $A$-modules. Remember that $\mu_R(-)$ is the number of elements in a minimal system of generators. 

With the notation of above, we have the following. 

%$\indCM(R)$が有限

\begin{lem}\label{4.8}
Let $R = k[[H]]$ be the semigroup ring of a numerical semigroup $H$ over a field $k$. Then the following conditions are equivalent.
\begin{enumerate}[$(1)$]
\item $R$ is a Gorenstein ring of finite CM-representation type. 
\item $H$ has one of the following forms:
\begin{enumerate}[$(a)$]
\item $H=\Bbb N$,
\item $H = \left<2, 2q + 1\right>~(q \ge 1)$,
\item $H=\left<3, 4\right>$, or
\item $H=\left<3, 5\right>$.
\end{enumerate}
\end{enumerate}
\end{lem}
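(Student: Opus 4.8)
The plan is to reduce the statement to known classifications of semigroup rings of finite CM-representation type, and then to intersect with the (well-understood) list of Gorenstein numerical semigroup rings.

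First I would recall that $R=k[[H]]$ is Gorenstein if and only if the numerical semigroup $H$ is \emph{symmetric}, i.e. for every $n\in\bbZ$ one has $n\in H \iff \rmf(H)-n\notin H$; equivalently $\rmr(R)=|\mathrm{PF}(H)|=1$. This is classical (Kunz). On the other hand, by the classification of one-dimensional complete local rings of finite CM-representation type --- which over a field goes back to Jacobinski, Drozd--Ro\u\i{}ter, and for the analytically unramified case is completely explicit (see e.g. Yoshino's book, Chapter 9, or Leuschke--Wiegand) --- a semigroup ring $k[[H]]$ has finite CM-representation type if and only if $R$ dominates and is dominated appropriately, and in the symmetric case the surviving possibilities are exactly the ones with small multiplicity. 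Concretely, for Gorenstein $R=k[[H]]$ the finite-representation-type condition forces $\rme(R)=v(R)\le 3$ essentially because the multiplicity of any one-dimensional local ring of finite CM type is at most $3$ (a theorem in Yoshino's book): rings of multiplicity $\ge 4$ always have infinitely many indecomposable MCM modules. Since the multiplicity of $k[[H]]$ equals the smallest nonzero element $\rme(R)=\min\{a\in H\mid a>0\}$, this restricts $H$ to have $\rme(R)\in\{1,2,3\}$.

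The key steps, in order, are then: (i) If $\rme(R)=1$, then $H=\bbN$ and $R=V=k[[t]]$ is a DVR, which is Gorenstein of finite CM type, giving case $(a)$. (ii) If $\rme(R)=2$, then $H=\langle 2, 2q+1\rangle$ for some $q\ge 1$ (any numerical semigroup of multiplicity $2$ has this shape); every such $H$ is automatically symmetric, so $R$ is Gorenstein, and it has finite CM-representation type because these are precisely the simple curve singularities of type $A_{2q}$ (in fact $R\cong k[[x,y]]/(y^2-x^{2q+1})$), giving case $(b)$. (iii) If $\rme(R)=3$, then $R$ has minimal multiplicity (as $v(R)\le\rme(R)$ always and $v(R)\ge 2$ forces $v(R)=3$ unless $v(R)=2$, in which case $R$ is a hypersurface $k[[x,y]]/(f)$ with $\rme=3$); here one invokes the explicit classification of multiplicity-$3$ semigroup rings of finite CM type, and then among those keeps only the Gorenstein (symmetric) ones. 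For multiplicity $3$ the semigroup rings of finite CM-representation type are $\langle 3,4\rangle$, $\langle 3,5\rangle$, $\langle 3,4,5\rangle$, $\langle 3,5,7\rangle$; of these only $\langle 3,4\rangle$ and $\langle 3,5\rangle$ are symmetric (they are the $E_6$: $y^3-x^4$ and $E_8$: $y^3-x^5$ singularities), while $\langle 3,4,5\rangle$ and $\langle 3,5,7\rangle$ have Cohen-Macaulay type $2$ and hence are not Gorenstein. This yields cases $(c)$ and $(d)$ and exhausts all possibilities, proving $(1)\Rightarrow(2)$. The converse $(2)\Rightarrow(1)$ is immediate: in each of $(a)$--$(d)$ the ring is a (completed) ADE curve singularity, hence Gorenstein and of finite CM-representation type by the classical theory of simple singularities.

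The main obstacle I anticipate is not conceptual but bookkeeping: one must carefully justify that multiplicity $\ge 4$ (equivalently $\min H\ge 4$) always produces infinite CM-representation type, and one must cite (or re-derive) the precise list of multiplicity-$3$ numerical semigroups of finite CM type rather than hand-waving. A clean way around grinding through this is to cite the known classification of one-dimensional hypersurface and minimal-multiplicity rings of finite CM type directly (Greuel--Kn\"orrer, Buchweitz--Greuel--Schreyer, or Yoshino's Chapter~9--10), observe that each surviving ring is a simple plane curve singularity $A_n,\,D_n,\,E_6,\,E_7,\,E_8$, and then simply read off which of those arise as $k[[H]]$ and are Gorenstein. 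Under that route the proof becomes a short matching argument: the Gorenstein semigroup rings among the simple curve singularities are exactly $A_{2q}$ (multiplicity $2$) together with $E_6$ and $E_8$ (multiplicity $3$), and adding the trivial case $H=\bbN$ gives precisely the list $(a)$--$(d)$.
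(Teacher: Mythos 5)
Your proposal is correct, and its backbone is the same as the paper's: both rest on the Drozd--Roiter criterion (Leuschke--Wiegand, Theorem 4.10) that finite CM-representation type is equivalent to $\mu_R(\overline{R})\le 3$ and $\mu_R([\m \overline{R}+R]/R)\le 1$, combined with $\mu_R(\overline{R})=\rme(R)=\min(H\setminus\{0\})$ for $R=k[[H]]$, followed by a case analysis on the multiplicity. The executions differ in the multiplicity-$3$ case: you quote the full list of multiplicity-$3$ semigroups of finite CM type ($\langle 3,4\rangle$, $\langle 3,5\rangle$, $\langle 3,4,5\rangle$, $\langle 3,5,7\rangle$ --- a correct list) and then discard the non-symmetric members, whereas the paper first uses the Gorenstein hypothesis to force $v(R)=2$ (a Gorenstein ring with $v(R)=\rme(R)=3$ would have Cohen--Macaulay type $2$), so that $H=\langle 3,b\rangle$, and then pins down $b\in\{4,5\}$ by computing $\m V+R$ directly; this sidesteps classifying the non-Gorenstein multiplicity-$3$ cases altogether. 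Your alternative shortcut via the ADE classification of simple plane curve singularities is viable but less robust here, since the lemma is stated over an arbitrary field $k$ and the Greuel--Kn\"orrer and Buchweitz--Greuel--Schreyer results carry characteristic restrictions, while the Drozd--Roiter computation (and hence the paper's argument, and your primary route) is characteristic-free. Finally, your claim that $\rme(R)\le 3$ for a one-dimensional local ring of finite CM type should be justified exactly as you implicitly do, through the first Drozd--Roiter condition $\mu_R(\overline{R})\le 3$ and the equality $\mu_R(\overline{R})=\ell_R(V/t^{\rme(R)}V)=\rme(R)$ valid for semigroup rings.
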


\begin{proof}
By \cite[Theorem 4.10]{LW}, the ring $R$ is of finite CM-representation type if and only if $\mu_R(V) \le 3$ and $\mu_R([\m V + R]/R) \le 1$. Set $e = \rme(R)$. Then $e = \min\{a_i \mid 1 \le i \le \ell\}$ and $\m V = t^e V$. So $\mu_R(V) = \ell_R(V/\m V) = \ell_R(V/t^eV) = e$. Hence, without loss of generality, we may assume $\mu_R(V) \le 3$. If $\mu_R(V)=1$, then $H = \Bbb N$ and $R=V$. If $\mu_R(V)=2$, then $H=\left<2, 2q + 1\right>$ with $q \ge 1$. As $H$ is symmetric, the ring $R$ is Gorenstein. Note that $V=R + Rt$. So $\m V + R = t^2V + R = R + Rt^3$. This implies $\mu_R([\m V + R]/R) = 1$. Thus $R$ is of finite CM-representation type. We consider the case where $\mu_R(V) = 3$. Suppose that $R$ is a Gorenstein ring of finite CM-representation type. Then $\rme(R) \ne v(R)$, so that $v(R) = 2$. Hence, either $H=\left<3, 4\right>$, or $\left<3, 5\right>$. Conversely, we are assuming that $H=\left<3, 4\right>$ (resp. $H=\left<3, 5\right>$). Then $R$ is a Gorenstein ring because $H$ is symmetric. As $V = R + Rt + Rt^2$, we see that $\m V + R = R + Rt^5$ (resp. $\m V + R = R + Rt^4$). Therefore $\mu_R([\m V + R]/R) = 1$, and hence $R$ is of finite CM-representation type. 
\end{proof}

\begin{rem}
Let $R=k[[H]]$ be the semigroup ring of a numerical semigroup $H$ over a field $k$. Assume that $R$ is Gorenstein and $k$ is infinite. By \cite[Corollary 3.16]{HR} and Lemma \ref{4.8}, the ring $R$ is of finite CM-representation type if and only if 
 $R$ is {\it small}, i.e., it has only finitely many trace ideals, or equivalently, all trace ideals are integrally closed. 
\end{rem}

We apply Theorem \ref{4.1} and Lemma \ref{4.8} to get the following. 

\begin{cor}\label{4.9}
Let $R$ be the numerical semigroup ring over a field $k$.
Suppose that $R$ is a generalized Gorenstein ring with minimal multiplicity. 
Then the following conditions are equivalent.
\begin{enumerate}[$(1)$]
\item The category $\OCM(R)$ is of finite type.
\item $S = k[[H]]$ is a semigroup ring of $H$, where $H$ is one of the following forms:
\begin{enumerate}[$(a)$]
\item $H=\Bbb N$,
\item $H = \left<2, 2q + 1\right>~(q \ge 1)$,
\item $H=\left<3, 4\right>$, or
\item $H=\left<3, 5\right>$.
\end{enumerate}
\end{enumerate}
\end{cor}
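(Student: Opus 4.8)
The plan is to deduce this from Theorem~\ref{4.1} and Lemma~\ref{4.8}; the only genuinely new point is to check that the ring $S = R[K]$ is again a numerical semigroup ring.

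First I would apply Theorem~\ref{4.1}: since $R$ is a generalized Gorenstein ring with minimal multiplicity, $\OCM(R)$ is of finite type if and only if $\CM(S)$ is of finite type, i.e.\ $S$ is of finite CM-representation type. I would also record, exactly as in the proof of Theorem~\ref{4.1} (via \cite[Theorem 3.4]{CCGT}), that $S = R[K]$ is a Gorenstein ring. Thus the task reduces to showing that $S$ is a numerical semigroup ring $k[[H']]$ and then invoking Lemma~\ref{4.8} for $S$.

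Next I would identify $S$ as a numerical semigroup ring. Writing $R = k[[H]]$ and $f = \rmf(H)$, the explicit description $K = \sum_{c \in \mathrm{PF}(H)} R\, t^{f-c}$ inside $V = k[[t]] = \overline{R}$ shows that $K$ is generated by monomials, and the inclusion $R \subseteq K$ forces each exponent $f - c$ to be a nonnegative integer. Hence $S = R[K] = R[\, t^{f-c} \mid c \in \mathrm{PF}(H)\,]$ is the $R$-subalgebra of $V$ generated by monomials; being a module-finite extension of the complete local ring $R$ it is itself complete, so $S = k[[H']]$ where $H' = \langle\, H,\ f-c\ (c \in \mathrm{PF}(H))\,\rangle$ is a numerical semigroup (its generators have $\GCD$ one since $H' \supseteq H$). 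Alternatively, one may use that in the proof of Theorem~\ref{4.1} the ring $S$ appears as $R_N$, an iterated blow-up of $R$ at Jacobson radicals, together with the fact that the blow-up of a numerical semigroup ring at its maximal ideal is again a numerical semigroup ring; this also yields $S = k[[H']]$.

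Finally, since $S = k[[H']]$ is Gorenstein, Lemma~\ref{4.8} applies verbatim: $S$ is of finite CM-representation type precisely when $H'$ is one of $\Bbb N$, $\langle 2, 2q+1\rangle$ $(q\ge1)$, $\langle 3, 4\rangle$, $\langle 3, 5\rangle$. Chaining this with the equivalence from Theorem~\ref{4.1} gives $(1)\Leftrightarrow(2)$. The one step needing care is the middle one: one must use the monomial form of $K$ and the completeness of the finite extension $R \subseteq S$ to conclude that $S$ is literally of the form $k[[H']]$, not merely some intermediate ring between $R$ and $V$; granting that, the rest is bookkeeping on top of the cited results.
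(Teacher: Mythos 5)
Your proposal is correct and follows essentially the same route as the paper: reduce via Theorem~\ref{4.1} to the finite CM-representation type of $S=R[K]$, note that $S$ is Gorenstein, and apply Lemma~\ref{4.8}. The only difference is that you supply a justification (the monomial generation of $K$ and completeness of the finite extension $R\subseteq S$) for the fact that $S$ is again a numerical semigroup ring, which the paper simply asserts; your argument for that step is sound.
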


\begin{proof}
The ring $S=R[K]$ is Gorenstein (see the proof of Theorem \ref{4.1}) and is the numerical semigroup ring over $k$. By Theorem \ref{4.1}, the set $\indOCM(R)$ is finite if and only if so is $\indCM(S)$, i.e., $S$ is of finite CM-representation type. The latter condition is equivalent to saying that the corresponding semigroup of $S$ has one of the forms stated in the assertion $(2)$.  
\end{proof}

%%%%%%%%%%%%%%%%%%%%%%%%%%%%%%%%%%%%%%%%%%%%%%%%%%%%%%%%%%%%%%%%%%%%%%%%%%%%%%%%%%%%%%%%%%%%%%%%%%    Ulrich ideals    %%%%%%%%%%%%%%%%%%%%%%%%%%%%%%%%%%%%%%%%%%%%%%%%%%%%%%%%%%%%%%%%

\section{Relation between Ulrich ideals}\label{sec5}

This section aims at exploring the relation between the finiteness of $\indOCM(R)$ and the set of Ulrich ideals. 
The notion of Ulrich ideals is one of the modifications of that of stable maximal ideal introduced in 1971 by his monumental paper \cite{Lipman} of J. Lipman. The present modification was formulated by S. Goto, K. Ozeki, R. Takahashi, K.-i. Watanabe, and K.-i. Yoshida \cite{GOTWY} in 2014, where the authors developed the basic theory, revealing that the behavior of Ulrich ideals has some ample information about the singularities of base rings. If a Cohen-Macaulay local ring $A$ has finite CM-representation type, then $A$ contains only finitely many Ulrich ideals (\cite[Theorem 7.8]{GOTWY}). In a one-dimensional non-Gorenstein almost Gorenstein local ring, the only possible Ulrich ideal is the maximal ideal (\cite[Theorem 2.14]{GTT2}). In \cite{GIT} the authors explored the ubiquity of Ulrich ideals in a {\it $2$-AGL} rings (one of the generalizations of Gorenstein rings of dimension one) and showed that the existence of two-generated Ulrich ideals reflects a rather strong restriction on the structure of base rings (\cite[Theorem 4.7]{GIT}). Moreover, over a Gorenstein local ring $(A, \m)$, if $I$ and $J$ are Ulrich ideals of $A$ with $\m J \subseteq I \subsetneq J$, then $A$ must be a hypersurface (\cite[Corollary 7.5]{GOTWY}).

%Nevertheless, even for the one-dimensional Cohen-Macaulay local rings, in general we lack an explicit and physical list of Ulrich ideals contained inside those rings, which possibly prevents further developments of the study of Ulrich ideals. 

Let $(R, \m)$ be a Cohen-Macaulay local ring with $\dim R=1$. We assume that $\rmQ(R)$ is a Gorenstein ring.  
An $\m$-primary ideal $I$ of $R$ is called {\it Ulrich ideal} if $I^2 = aI$ and $I/(a)$ is free as an $R/I$-module for some $a \in I$ (\cite[Definition 1.1]{GOTWY}).
We denote by $\calX_R$ the set of Ulrich ideals of $R$. Recall that $\rmr(R)$ denotes the Cohen-Macaulay type of $R$.

The following generalizes \cite[Theorem 7.8]{GOTWY} for one-dimensional Cohen-Macaulay rings. 

\begin{thm}\label{4.10}
If the category $\OCM(R)$ is of finite type, then $\calX_R$ is a finite set. 
\end{thm}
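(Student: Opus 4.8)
The plan is to connect Ulrich ideals of $R$ directly with objects in $\OCM(R)$, so that a finiteness statement for $\indOCM(R)$ forces a finiteness statement for $\calX_R$. The key observation is that an Ulrich ideal $I$ is in particular a regular stable trace ideal (this is recorded in the diagram following Proposition~\ref{1.10}), hence regular, reflexive, and a trace ideal; so $I \in \OCM(R)$ and, setting $A = \End_R(I) = I:I$, Theorem~\ref{main}(2) gives $\OCM(A) = \OCM(R,I)$. More importantly, for an Ulrich ideal one has $I^2 = aI$ with $I/(a)$ free over $R/I$, say $I/(a) \cong (R/I)^{\oplus (n-1)}$ where $n = \mu_R(I)$; this means the first syzygy $\Omega_R(R/I)$ is closely tied to $I$, and in fact the standard structure theory of Ulrich ideals (from \cite{GOTWY}) shows that $\Omega_R(R/I) \cong I$ and, more usefully, that $R/I$ is itself built from $A/I$-data. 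The strategy is: from an infinite family $\{I_\lambda\}$ of pairwise distinct Ulrich ideals, produce an infinite family of pairwise non-isomorphic indecomposable modules in $\OCM(R)$, contradicting finite type.

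Concretely, I would argue as follows. Suppose $\calX_R$ is infinite; pick pairwise distinct $I_\lambda \in \calX_R$. For each $\lambda$, write $a_\lambda \in I_\lambda$ with $I_\lambda^2 = a_\lambda I_\lambda$, and set $A_\lambda = I_\lambda : I_\lambda = I_\lambda/a_\lambda$. Since $I_\lambda$ is stable, $A_\lambda \cong I_\lambda$ as an $A_\lambda$-module, and $A_\lambda$ is a birational module-finite extension of $R$ with $A_\lambda \in \OCM(R)$ (it is reflexive as an $R$-module because $I_\lambda$ is a reflexive trace ideal and $A_\lambda = R:I_\lambda$, so $R:(R:A_\lambda) = A_\lambda$; apply Setting~\ref{3.1}). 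Now $\OCM(R)$ being of finite type means $\OCM(R) = \mathrm{add}_R M$ for a single $M$, so each $A_\lambda$, being in $\OCM(R)$, is a direct summand of $M^{\oplus k_\lambda}$; decomposing $A_\lambda$ into indecomposable $R$-modules (equivalently indecomposable $A_\lambda$-modules, by Lemma~\ref{2.2}(3)), only finitely many $R$-isomorphism classes occur among all indecomposable summands of all the $A_\lambda$. Since $A_\lambda$ is a ring containing $R$, its isomorphism class as an $R$-module determines $A_\lambda$ up to a homothety in $\rmQ(R)$ — and one checks that two distinct Ulrich ideals $I_\lambda \ne I_\mu$ with $A_\lambda \cong A_\mu$ as rings (equivalently, with $R:I_\lambda = R:I_\mu$) would force $I_\lambda = R:(R:I_\lambda) = R:(R:I_\mu) = I_\mu$ by reflexivity, a contradiction. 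Thus $\lambda \mapsto A_\lambda$ is injective, so there are infinitely many pairwise non-isomorphic reflexive extensions $A_\lambda$, hence infinitely many isomorphism classes of indecomposable reflexive $R$-modules (a ring of multiplicity growing, or simply a counting argument on indecomposable summands), contradicting finite type.

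The main obstacle is the last injectivity/counting step: passing from "infinitely many distinct Ulrich ideals" to "infinitely many non-isomorphic objects of $\OCM(R)$." The map $I \mapsto A = R:I$ need not be injective on all of $\calX_R$ a priori without invoking reflexivity, but since every $I \in \calX_R$ is a regular reflexive trace ideal, the correspondence $I \leftrightarrow \End_R(I)$ recorded after Proposition~\ref{1.10} is a genuine bijection onto a subset of $\calY_R$, so distinct Ulrich ideals give distinct extension rings $A$. The remaining subtlety is that distinct rings $A$, $A'$ in $\calY_R$ could in principle be isomorphic as $R$-modules; here one uses that an $R$-module isomorphism $A \cong A'$ of birational extensions inside $\rmQ(R)$ is given by multiplication by a unit $u \in \rmQ(R)$ with $uA = A'$, and then $A' = uA$ is a ring containing $R$ only if $u$ is a unit of $A$ with $A = A'$; hence distinct $A$ are non-isomorphic as $R$-modules. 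This pins down infinitely many isomorphism classes in $\OCM(R)$, and since finite type means $\OCM(R) = \mathrm{add}_R M$ has only finitely many indecomposables, at least one $A_\lambda$ (in fact, infinitely many) must be decomposable in a way that eventually repeats — but an elementary Krull--Schmidt bookkeeping over the semilocal ring, or simply the observation that $\mathrm{add}_R M$ contains only finitely many isomorphism classes of modules of bounded length while $\ell_R(A_\lambda/R)$ can be controlled, yields the contradiction. I would organize the write-up around the clean statement: the assignment $I \mapsto \End_R(I)$ embeds $\calX_R$ into $\indOCM(R)$ up to finite multiplicity, so $|\indOCM(R)| < \infty \Rightarrow |\calX_R| < \infty$.
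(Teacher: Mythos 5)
Your overall strategy --- sending an Ulrich ideal $I$ to the birational extension $\End_R(I)=I:I=R:I$, showing this assignment is injective, and showing the resulting rings are pairwise non-isomorphic objects of $\OCM(R)$ --- is sound and genuinely different from the paper's argument. The two lemmas you need for it do hold: injectivity follows from $I=R:(R:I)$ because Ulrich ideals are reflexive, and two distinct birational extensions $A\ne A'$ inside $\rmQ(R)$ cannot be isomorphic as $R$-modules, since an $R$-isomorphism is multiplication by a unit $u$ with $uA=A'$, whence $A'=A'\cdot A'=u^2A$ forces $uA=u^2A$, i.e., $A=uA=A'$.

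However, the final step has a genuine gap. Producing infinitely many pairwise non-isomorphic objects of $\OCM(R)$ does \emph{not} by itself contradict finite type: $\mathrm{add}_RM$ always contains the infinitely many pairwise non-isomorphic modules $M, M^{\oplus 2}, M^{\oplus 3},\dots$. What you need, and never establish, is a uniform bound on the number of indecomposable summands of the $A_\lambda$ --- equivalently, a bound on $\mu_R(A_\lambda)$ independent of $\lambda$; only then does ``finitely many indecomposables'' force ``finitely many isomorphism classes'' among modules built from them. The two justifications you offer do not work as stated: the $A_\lambda$ are not modules of finite length, and $\ell_R(A_\lambda/R)$ is neither shown to be bounded nor would its boundedness directly yield the count. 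The missing bound is available --- since $I_\lambda$ is stable, $A_\lambda\cong I_\lambda$ as $R$-modules, and $\mu_R(I_\lambda)-1\le \rmr(R)$ by \cite[Theorem 2.5]{GTT2}; alternatively $\mu_R(A_\lambda)\le \e(x;R)$ for a fixed parameter $x$ because $A_\lambda$ is torsionfree of rank one --- so your proof can be completed. But this is precisely where the paper does its real work: its proof uses the second syzygy $\Syz_2^R(R/I)$ instead of $\End_R(I)$, gets injectivity of $I\mapsto[\Syz_2^R(R/I)]$ from \cite[Corollary 7.7]{GOTWY}, and bounds $\mu_R(\Syz_2^R(R/I))=t(t+1)$ by $\rmr(R)(\rmr(R)+1)$ via \cite[Theorem 7.1]{GOTWY} and \cite[Theorem 2.5]{GTT2} before running exactly the counting argument you are implicitly relying on.
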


\begin{proof}
We may assume $\calX_R \ne \emptyset$. We define 
$$
\calW_R = \left\{\left[\Syz_2^R(R/I)\right] \mid I \in \calX_R\right\}
$$ 
where $\left[\Syz_2^R(R/I)\right]$ denotes the isomorphism class of the second syzygy module $\Syz_2^R(R/I)$ of $R/I$. By \cite[Corollary 7.7]{GOTWY}, there is a one-to-one correspondence below
$$
\calX_R \to \calW_R, \ I \mapsto \left[\Syz_2^R(R/I)\right]. 
$$
Let $I \in \calX_R$ and set $M = \Syz_2^R(R/I) \in \OCM(R)$. Then $M \ne (0)$ and $\mu_R(M) = t(t+1)$, where $t = \mu_R(I)-1 >0$ (\cite[Theorem 7.1]{GOTWY}). By \cite[Theorem 2.5]{GTT2}, we get $t\cdot \rmr(R/I) =\rmr(R)$, whence $t \le \rmr(R)$.  Therefore
$$
\mu_R(M) = t(t+1) \le \rmr(R)\cdot(\rmr(R) + 1)
$$
so the minimal number $\mu_R(M)$ of generators of $M$ has an upper bound which is independent of the choice of $I \in \calX_R$. Hence the set $\calW_R$ is finite because $\indOCM(R)$ is finite. This yields $\calX_R$ is a finite set.
\end{proof}

We explore some examples. 

\begin{ex}\label{4.11}
Let $(A, \m)$ be a Cohen-Macaulay local ring with $\dim A=1$ admitting the canonical module $\rmK_A$. Suppose that $\rmQ(A)$ is a Gorenstein ring and $A$ has an infinite residue class field. We set $R = A \ltimes A$ the idealization of $A$ over $A$. Then, because $\calX_R$ is infinite (\cite[Example 2.2]{GOTWY}), we have $|\indOCM(R)| = \infty$.
\end{ex}

As we show next, the converse of Theorem \ref{4.10} does not hold in general. 

\begin{ex}\label{4.12}
Let $R =k[[t^3, t^7]]$ be the semigroup ring over a field $k$. Then, by \cite[Example 2.7]{EG}, we have
$$
\calX_R = \{(t^6-ct^7, t^{10}) \mid 0 \ne c \in k\}. 
$$
Hence $\calX_R$ is a finite set if $k$ is finite. However, by Lemma \ref{4.8}, we get $|\indOCM(R)| = |\indCM(R)| = \infty$. 
\end{ex}

%%%%%%%%%%%%%%%%%%%%%%%%%%%%%%%%%%%%%%%%%%%%%%%%%%%%%%%%%%%%%%%%%%%%%%%%%%%%%%%%%%%%%%%%%%%%%%%%%%%%%%%%%%%%%%%%%%%%%%%%%%%%%%%%%%%%%%%%%%%%%%%%%%%%%%%%%%%%%%%%%%%%%%%%%%%%%%%%%%%%%%%%%%%%%%%%%%%%%%%%%%%%%%%%%%%%%%%%%%%%%%%%%

\section{Relation between analytically unramifiedness}\label{sec6}

In this section, let $(R, \m)$ be a Cohen-Macaulay local ring with $\dim R=1$. %We assume that $\rmQ(R)$ is a Gorenstein ring. 
Let $A$ be a birational finite extension of $R$. Set $I = R:A$. 
Since $A$ is a fractional ideal of $R$, the ideal $I$ is regular. %We identify $I$ with $\Hom_R(A, R)$. 
Note that $I = R:A \cong \Hom_R(A, R)$ is reflexive. Thus $I = R:(R:I)$. Moreover, because $I = IA = (R:A) A = \tr_R(A)$, we see that $I$ is a trace ideal of $R$. Hence, $I$ is a regular reflexive trace ideal of $R$.

We explore a birational module-finite extension $\widetilde{A} = I:I$. The ring coincides with $A$ if it is reflexive. Otherwise, in general, we have $R \subseteq A \subseteq \widetilde{A}  \subseteq \overline{R}$. 
Hence
$\widetilde{A} =R:I$, $R:\widetilde{A} = I$, and $\widetilde{A}  = R:(R: \widetilde{A})$. So, the ring $\widetilde{A}$ is reflexive as an $R$-module. 

%We begin with the following. 

\begin{lem}\label{5.1}
Suppose that the normalization $\overline{R}$ is not finitely generated as an $R$-module. Then there exists a chain of intermediate rings 
$$
R \subsetneq A_1 \subsetneq A_2 \subsetneq \ldots \subsetneq A_{\ell} \subsetneq \ldots \subsetneq \overline{R} 
$$
between $R$ and $\overline{R}$ such that $A_i$ is a finitely generated reflexive $R$-module for all $i \ge 1$. 
\end{lem}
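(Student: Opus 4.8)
The plan is to build the chain recursively, starting from $A_0 = R$ and obtaining $A_{i+1}$ from $A_i$ by adjoining one element of $\overline{R}$ not already in $A_i$ and then passing to the reflexive hull $\widetilde{(-)} = R:(R:(-))$. Passing to the reflexive hull is what keeps us inside the class of finitely generated reflexive birational extensions, and it never undoes the enlargement just made, because every regular fractional module embeds into its reflexive hull. The only inputs are the facts recorded in the paragraph preceding the statement: for a birational module-finite extension $B$ of $R$, putting $I = R:B$, the ideal $I$ is a regular reflexive trace ideal and $\widetilde{B} := I:I = R:I$ is again a birational module-finite extension of $R$ which is reflexive as an $R$-module, is a ring, and satisfies $B \subseteq \widetilde{B} \subseteq \overline{R}$.

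Concretely, I would first note that $\overline{R} \neq R$, since $R$ is a finitely generated $R$-module and $\overline{R}$ is not. Put $A_0 = R$, which is a ring, a free (hence reflexive) finitely generated $R$-module, and is contained in $\overline{R}$. Assume inductively that $A_i$ is a ring which is a finitely generated reflexive $R$-module with $R \subseteq A_i \subseteq \overline{R}$. Since $A_i$ is finitely generated over $R$ but $\overline{R}$ is not, we have $A_i \subsetneq \overline{R}$; choose $x_i \in \overline{R}\setminus A_i$ and set $B_i = A_i[x_i]$. As $x_i$ is integral over $R \subseteq A_i$, the ring $B_i$ is module-finite over $A_i$, hence over $R$, and $R \subseteq A_i \subsetneq B_i \subseteq \overline{R}$; thus $B_i$ is a birational module-finite extension of $R$. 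Now set $A_{i+1} = \widetilde{B_i}$. By the quoted facts, $A_{i+1}$ is a ring, a finitely generated reflexive $R$-module, and $B_i \subseteq A_{i+1} \subseteq \overline{R}$; since $A_i \subsetneq B_i \subseteq A_{i+1}$, we obtain $A_i \subsetneq A_{i+1}$.

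Iterating produces a chain $R = A_0 \subsetneq A_1 \subsetneq A_2 \subsetneq \cdots \subsetneq \overline{R}$ in which every $A_i$ is a finitely generated reflexive $R$-module and every $A_i$ is properly contained in $\overline{R}$ (being module-finite over $R$), which is exactly the assertion. I do not anticipate a real obstacle here; the substantive content sits entirely in the already-established claim that the reflexive hull of a birational module-finite extension is again module-finite, reflexive, and a ring containing the original. The one bookkeeping point to watch is the finite generation of $A_{i+1} = R:(R:B_i)$ over $R$, which follows since $R:B_i \cong \Hom_R(B_i,R)$, and hence $R:(R:B_i) \cong \Hom_R(\Hom_R(B_i,R),R)$, is finitely generated once $B_i$ is; but this, too, is part of the cited discussion.
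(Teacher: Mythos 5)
Your proposal is correct and takes essentially the same route as the paper: both build the chain by repeatedly passing from a proper module-finite birational extension to its reflexive hull $R:(R:B)=(R:B):(R:B)$, using the facts recorded just before the lemma. You are merely a bit more explicit about the inductive step (adjoining an element $x_i\in\overline{R}\setminus A_i$, which exists because $A_i$ is finitely generated and $\overline{R}$ is not), where the paper simply says to repeat the process.
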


\begin{proof}
As $R \ne \overline{R}$, we choose a birational module-finite extension $A$ of $R$ such that $R \subsetneq A \subseteq \overline{A}$. We set $I = R: A$ and consider $A_1 = I:I$. Then $R \subsetneq A \subseteq A_1 \subseteq \overline{R}$ and $A_1$ is a finitely generated reflexive $R$-module. Because $\overline{A_1} = \overline{R}$ and $A_1$ is not regular, repeating the same process for $A_1$ recursively gives an infinite chain of birational module-finite extensions
$$
R \subsetneq A_1 \subsetneq A_2 \subsetneq \ldots \subsetneq A_{\ell} \subsetneq \ldots \subsetneq \overline{R} 
$$
where $A_i$ is reflexive as an $R$-module for all $i \ge 1$. 
\end{proof}

\begin{lem}\label{5.2}
Let $A$ be a birational module-finite extension of $R$. Then, for each non-zero finitely generated $A$-module $X$, $X$ has an indecomposable decomposition as an $A$-module. 
\end{lem}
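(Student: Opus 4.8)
The plan is to deduce the statement from the elementary fact that every nonzero Noetherian module over an arbitrary ring is a finite direct sum of indecomposable submodules; no Krull--Schmidt uniqueness is asserted, so nothing about semiperfectness of endomorphism rings or completeness of $A$ is needed. First I would record that the ring $A$ is Noetherian: it is a finitely generated module over the Cohen--Macaulay (hence Noetherian) local ring $R$, so this is immediate from the Eakin--Nagata theorem, or one may simply argue directly that a finitely generated $A$-module $X$ is a finitely generated, and therefore Noetherian, $R$-module, whence the ascending chain condition holds for its $A$-submodules as well. Thus the given $X$ is a nonzero Noetherian $A$-module.

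Second, I would prove the general lemma by contradiction. Suppose a nonzero Noetherian module $X$ is not a finite direct sum of indecomposables. Then $X$ is decomposable, say $X = C_1 \oplus B_1$ with $C_1, B_1 \neq 0$, and at least one of $C_1, B_1$ is again not a finite direct sum of indecomposables; after possibly interchanging the two, assume $B_1$ has this property. Iterating produces, for each $k \geq 1$, a decomposition $B_{k-1} = C_k \oplus B_k$ (with $B_0 := X$) in which $C_k \neq 0$ and $B_k$ is still not a finite direct sum of indecomposables. The partial sums then give a strictly increasing chain $C_1 \subsetneq C_1 \oplus C_2 \subsetneq C_1 \oplus C_2 \oplus C_3 \subsetneq \cdots$ of $A$-submodules of $X$, contradicting that $X$ is Noetherian. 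Hence $X$ is a finite direct sum of indecomposable $A$-modules, which is the assertion.

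I do not expect a serious obstacle here; the one point that requires a little care is the bookkeeping in the iteration, namely checking at each stage that one may keep selecting a summand which is simultaneously nonzero and not yet a finite direct sum of indecomposables (in particular decomposable), so that the resulting chain is genuinely strictly increasing. Everything else is formal, and the hypotheses on $R$ enter only through the Noetherian property.
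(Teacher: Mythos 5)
Your proof is correct, but it takes a genuinely different route from the paper's. The paper argues by minimal counterexample on the number of generators: it chooses a counterexample $X$ with $\mu_R(X)$ as small as possible, writes $X \cong Y \oplus Z$ with both summands nonzero, and uses the additivity $\mu_R(Y)+\mu_R(Z)=\mu_R(X)$ (valid because $R$ is local, so $\mu_R(M)=\dim_{R/\m}(M/\m M)$ by Nakayama) to get $\mu_R(Y),\mu_R(Z)<\mu_R(X)$; minimality then forces both summands, and hence $X$, to decompose into indecomposables. You instead note that $X$ is a Noetherian $A$-module (being finitely generated over the Noetherian ring $R$, so that the ACC holds for its $A$-submodules) and run the standard ascending-chain argument, extracting from a putative infinite splitting process a strictly increasing chain $C_1 \subsetneq C_1\oplus C_2 \subsetneq \cdots$ of submodules. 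Both arguments are sound and elementary; yours has the small advantage of not using locality of $R$ at all, only Noetherianity, whereas the paper's is a one-line induction once additivity of $\mu_R$ over direct sums is granted. The bookkeeping point you flag --- that at each stage one can select a summand that is simultaneously nonzero and still not a finite direct sum of indecomposables, hence in particular decomposable, so the chain is genuinely strict --- is exactly the step that needs checking, and it goes through as you describe.
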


\begin{proof}
Suppose the contrary and choose a counterexample $X \in \mod\, A$ so that $\mu_R(X)$ is as small as possible. Let $Y$ and $Z$ be non-zero $A$-modules with $X \cong Y \oplus Z$ as an $A$-module. Then $\mu_R(Y) + \mu_R(Z) = \mu_R(X)$ and $\mu_R(Y)$, $\mu_R(Z)$ are positive. The minimality of $\mu_R(X)$ guarantees that the $A$-modules $Y$ and $Z$ possess indecomposable decompositions as $A$-modules, so does $X$. This is absurd. 
\end{proof}

\if0
\begin{lem}\label{5.3}
Suppose the normalization $\overline{R}$ is infinitely generated as an $R$-module. Choose a chain of birational finite extensions of $R$ 
$$
R \subsetneq A_1 \subsetneq A_2 \subsetneq \ldots \subsetneq A_{\ell} \subsetneq \ldots \subsetneq \overline{R} 
$$
such that $A_i$ is reflexive as an $R$-module for all $i \ge 1$. For each $i \ge 1$, we denote by
$$
A_i= \bigoplus_{\alpha=1}^{n_1}M_{i, \alpha}
$$
the indecomposable decomposition of $A_i$, where $n_i > 0$ and $M_{i, \alpha}$ is an ideal of $A_i$ that is an indecomposable as an $A$-module. For each integers $i < j$, $1 \le \alpha \le n_i$, and $1 \le \beta \le n_j$, if $M_{i, \alpha} \cong M_{j, \beta}$ as an $R$-module, then $A_j {\cdot} M_{i, \alpha} \subseteq M_{i, \alpha}$ in $\rmQ(R)$. 
\end{lem}
\fi

Recall that a Noetherian local ring $(A, \m)$ is called analytically unramified, if the $\m$-adic completion $\widehat{A}$ is reduced. Note that, for a one-dimensional Cohen-Macaulay local ring $A$, $A$ is analytically unramified if and only if the normalization $\overline{A}$ is finite over $A$; see e.g., \cite[Theorem 4.6]{LW}.  
The following theorem provides a generalization of the fact that every one-dimensional Cohen-Macaulay local ring of finite CM-representation type is analytically unramified; see e.g., \cite[Proposition 4.15]{LW}. 

\begin{thm}[{cf. \cite[Lemma 6.6]{DMS}}]\label{5.4}
If the category $\Ref(R)$ is of finite type, then $R$ is analytically unramified. 
\end{thm}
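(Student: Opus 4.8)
The plan is to prove the contrapositive: assuming $R$ is \emph{not} analytically unramified, I will produce infinitely many pairwise non-isomorphic reflexive $R$-modules whose isomorphism types are nonetheless confined to a finite list, which is absurd. Since $R$ is one-dimensional Cohen--Macaulay, the failure of analytic unramifiedness means precisely that $\overline R$ is not module-finite over $R$ (as recalled above), so Lemma \ref{5.1} supplies a strictly ascending chain $R \subsetneq A_1 \subsetneq A_2 \subsetneq \cdots \subseteq \overline R$ of birational module-finite extensions, each $A_i$ being a finitely generated reflexive $R$-module; thus $A_i \in \Ref(R)$ for all $i$. I then suppose, seeking a contradiction, that $\Ref(R) = {\rm add}_R M$ for some $M \in \mod R$, and set $E = \End_R(M)$.

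First I would record two elementary facts about the chain. Each $A_i$ is a regular fractional ideal (it contains $1$), so $\End_R(A_i) \cong A_i : A_i$; since $A_i$ is a ring containing $1$ one has $A_i : A_i = A_i$, so $\End_R(A_i)$ is \emph{commutative}. Next, the $A_i$ are \emph{pairwise non-isomorphic} as $R$-modules: an isomorphism $\theta : A_i \xrightarrow{\sim} A_j$ (say $i<j$, so $A_i \subseteq A_j$) is multiplication by a unit $u \in \rmQ(R)$ with $uA_i = A_j$ and $u^{-1}A_j = A_i$; feeding $1 \in A_i$ and $1 \in A_j$ into these gives $u \in A_j$ and $u^{-1} \in A_i \subseteq A_j$, hence $uA_j = A_j$, whence $u^2 A_i = uA_j = A_j = uA_i$, so $A_i = uA_i = A_j$, contradicting $A_i \subsetneq A_j$.

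The heart of the matter is then to bound the number of isomorphism types of the $A_i$. I would pass through the standard equivalence $\Hom_R(M,-)$ between ${\rm add}_R M$ and the category of finitely generated projective $E$-modules; here $E$ is module-finite over the local ring $R$, hence semilocal, say $E/\rad(E) \cong \prod_{k=1}^s M_{d_k}(D_k)$ with $D_k$ division rings. Setting $P_i = \Hom_R(M, A_i)$, the $P_i$ are pairwise non-isomorphic finitely generated projective $E$-modules and $\End_E(P_i) \cong \End_R(A_i)$ is commutative. Since $P_i$ is projective, $\End_E(P_i)$ surjects onto $\End_{E/\rad(E)}(P_i/\rad(E)P_i) \cong \prod_k M_{a_k(i)}(D_k^{\mathrm{op}})$, where $a_k(i)$ is the multiplicity of the $k$-th simple in $P_i/\rad(E)P_i$; commutativity forces every $a_k(i) \le 1$, so $P_i/\rad(E)P_i$ has at most $2^s$ possible isomorphism types. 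Finally, a finitely generated projective module over the semilocal ring $E$ is determined up to isomorphism by its reduction modulo $\rad(E)$ (lift an isomorphism of the reductions through the projective $P_i$, note it is onto and split by Nakayama and projectivity, and kill the kernel again by Nakayama). Hence the $P_i$, and therefore the $A_i$, fall into at most $2^s$ isomorphism classes, contradicting that they are pairwise non-isomorphic; the theorem follows.

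I expect the main obstacle to be that last step: the ring $E = \End_R(M)$ is only guaranteed to be semilocal, not semiperfect, since $R$ need not be complete, so one cannot simply invoke Krull--Schmidt (which can genuinely fail for reflexive modules over non-complete rings). The workhorse replacing it is the fact that a finitely generated projective over a semilocal ring is pinned down by its top, combined with the multiplicity-free reduction forced by commutativity of $\End_R(A_i)$. As an alternative to the functor $\Hom_R(M,-)$, one can argue more concretely using Lemma \ref{5.2} to decompose each $A_i$ into indecomposables over itself and Lemma \ref{2.2}(3) to transfer indecomposability to $R$, with commutativity of $\End_R(A_i)$ ruling out repeated summands; but checking that only finitely many indecomposable summands occur across all $i$ still reduces to the same projective-module input.
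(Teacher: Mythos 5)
Your proof is correct, but it takes a genuinely different route from the paper's. Both arguments start from the same two inputs: the infinite chain $R \subsetneq A_1 \subsetneq A_2 \subsetneq \cdots$ of reflexive birational module-finite extensions from Lemma \ref{5.1}, and the fact that an $R$-isomorphism between regular fractional ideals is multiplication by a unit of $\rmQ(R)$. From there the paper decomposes each $A_i$ into indecomposable $A_i$-summands $M_{i,\alpha}$ (Lemma \ref{5.2}) and observes that an $R$-isomorphism $M_{i,\alpha}\cong M_{j,\beta}$ with $i<j$ forces $A_j\cdot M_{i,\alpha}\subseteq M_{i,\alpha}$; if every summand of $A_i$ recurred at a later stage this would give $A_{i+1}A_i\subseteq A_i$, hence $A_{i+1}=A_i$, a contradiction, so each $A_i$ contributes a fresh indecomposable reflexive module and $\ind\Ref(R)$ is infinite. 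You instead show the $A_i$ themselves are pairwise non-isomorphic and then prove a counting statement: a subcategory of the form ${\rm add}_R M$ contains only finitely many isomorphism classes of objects with \emph{commutative} endomorphism ring, because the corresponding finitely generated projectives over the semilocal ring $E=\End_R(M)$ have multiplicity-free tops and are determined by those tops. The paper's argument is more elementary and self-contained (no projective-module theory, and it sidesteps the failure of Krull--Schmidt by the ``new summand at each stage'' device rather than by counting multiplicities), while yours isolates a cleaner, reusable principle and avoids indecomposable decompositions altogether; the bookkeeping in your final step (surjectivity of $\End_E(P)\to\End_{E/\rad(E)}(P/\rad(E)P)$ for $P$ projective, and the top-determines-projective lemma over a semilocal ring, both proved by Nakayama plus splitting) is standard and checks out.
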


\begin{proof}
Suppose the normalization $\overline{R}$ is not finitely generated. 
Choose a chain of birational module-finite extensions of $R$ 
$$
R \subsetneq A_1 \subsetneq A_2 \subsetneq \ldots \subsetneq A_{\ell} \subsetneq \ldots \subsetneq \overline{R} 
$$
such that $A_i$ is reflexive as an $R$-module for all $i \ge 1$. For each $i \ge 1$, we denote by
$$
A_i= \bigoplus_{\alpha=1}^{n_1}M_{i, \alpha}
$$
the indecomposable decomposition of $A_i$, where $n_i > 0$ and $M_{i, \alpha}~ (\ne (0))$ is an ideal of $A_i$ that is indecomposable as an $A_i$-module. For each integers $i < j$, $1 \le \alpha \le n_i$, and $1 \le \beta \le n_j$, if $M_{i, \alpha} \cong M_{j, \beta}$ as an $R$-module, then $A_j {\cdot} M_{i, \alpha} \subseteq M_{i, \alpha}$ in $\rmQ(R)$. 

We fix an integer $i \ge 1$. Suppose that, for each $1 \le \alpha \le n_i$, there exist $j > i$ and $1 \le \beta \le n_j$ satisfying $M_{i, \alpha} \cong M_{j, \beta}$ as an $R$-module. Then $A_j {\cdot} M_{i, \alpha} \subseteq M_{i, \alpha}$ in $\rmQ(R)$. As $j \ge i+1$, we obtain
$$
A_{i+1}{\cdot}M_{i, \alpha} \subseteq A_j{\cdot}M_{i, \alpha} \subseteq M_{i, \alpha} \subseteq A_i.
$$
This yields $A_{i+1}{\cdot}A_i \subseteq A_i$, because $A_i= \bigoplus_{\alpha=1}^{n_1}M_{i, \alpha}$. Hence $A_{i+1} = A_i$, which makes a contradiction. Therefore, there exists $1 \le \alpha \le n_i$ such that $M_{i, \alpha} \not\cong M_{j, \beta}$ as an $R$-module for all $j > i$ and $1 \le \beta \le n_j$. Thus the set of isomorphism classes of $M_{i, \alpha}$ below
$$
\left\{\left[ M_{i, \alpha} \right] \mid i \ge 1, \ 1 \le \alpha \le n_i \right\}
$$
is infinite. Since $M_{i, \alpha}$ is a reflexive $R$-module, it is $R$-indecomposable by Lemma \ref{2.2} (3). This is impossible, because the set $\ind \Ref(R)$ is finite. 
\end{proof}

If the category $\Ref(R)$ is of finite type, then Theorem \ref{5.4} shows $R$ is reduced; equivalently $R$ is {\it an isolated singularity}, i.e., the local ring $R_{\p}$ is regular for every non-maximal prime ideal $\p$ of $R$. 

Closing this paper we consider the converse of Theorem \ref{5.4}. To do this, 
Arf rings, defined by J. Lipman \cite{Lipman} in 1971, play a key role in our argument.  
An Arf ring has been properly generalized for a class of rings, which is used for classification of curve singularities, studied by C. Arf \cite{Arf} in 1949. 
For a Noetherian semi-local ring $R$ such that $R_M$ is a one-dimensional Cohen-Macaulay local ring for every $M \in \Max R$, we say that $R$ is {\it an Arf ring} if the following conditions are satisfied:
\begin{enumerate}[$(1)$]
\item Every integrally closed regular ideal $I$ in $R$ has a principal reduction, i.e., $I^{n+1} = a I^n$ for some $n \ge 0$ and $a \in I$.
\item If $x, y, z \in R$ such that $x$ is a non-zerodivisor on $R$ and $y/x, z/x \in \overline{R}$, then $yz/x \in R$. 
\end{enumerate}

Despite there are various assumptions on base rings, the following provides alternative proof of the results: \cite[Corollary 4.5]{D}, \cite[Corollary 7.9]{DL}, and \cite[Corollary 3.6]{IK}. 

%We prove this by using a different approach. 

%\cite[Corollary 4.5]{D} %Rはsemi-local ringだが, completeを仮定している。
%\cite[Corollary 7.9]{DL} % For complete Arf domains, there are only finitely many classes of indecomposable reflexive modules.
%\cite[Corollary 3.6]{IK} % Arf local domain and the normalization is finite and local ring. There are only finitely many classes of indecomposable reflexive modules.

\begin{cor}[{\cite[Corollary 3.5]{D}, \cite[Corollary 7.9]{DL}, \cite[Corollary 3.6]{IK}}]\label{6.4}
Suppose that $\overline{R}$ is a local ring and $\rmQ(R)$ is Gorensein. If $R$ is an analytically unramified Arf ring, then the category $\OCM(R)$ is of finite type. 
\end{cor}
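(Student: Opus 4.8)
The plan is to mimic the proof of Theorem~\ref{4.1}: build a finite tower of birational extensions $R = R_0 \subsetneq R_1 \subsetneq \cdots \subsetneq R_N = \overline{R}$ by iterated blowing-up at the maximal ideal, observe that each step is governed by Theorem~\ref{main}(2), and read off $\indOCM(R)$ from the top of the tower.

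First I would fix the structure. Because $R$ is analytically unramified it is reduced, so $\overline{R}$ is a reduced birational module-finite extension of $R$ (\cite[Theorem 4.6]{LW}); being a local ring it is therefore a discrete valuation ring $V$, and $R$ is a domain whose total ring of fractions is a field. Set $R_0 = R$ and, recursively, $R_{n+1} = R_n^{\m_n}$, the blow-up of $R_n$ at its maximal ideal $\m_n$. Each $R_n$ is a one-dimensional Cohen--Macaulay ring with $R_n \subseteq V$, $\rmQ(R_n) = \rmQ(R)$ and $\overline{R_n} = V$; moreover $R_n$ is local, since it is module-finite over $R$ and every maximal ideal of it is the contraction of the unique maximal ideal of $V$. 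By the structure theory of Arf rings each $R_n$ is again an Arf ring (\cite{Lipman}), and its maximal ideal $\m_n$, being a regular trace ideal of an Arf ring, is integrally closed, hence stable (\cite[Theorem 7.4]{DL}, \cite[Proposition 3.1]{IK}); in particular $R_{n+1} = \m_n : \m_n$. Since $R_n^{\m_n} = R_n$ exactly when $\m_n$ is invertible, i.e.\ when $R_n$ is a DVR, the chain $R_0 \subsetneq R_1 \subsetneq \cdots$ increases strictly until a DVR is reached; as it lies in $V$ and $\ell_R(V/R) < \infty$, there is $N$ with $R_N$ a DVR, which forces $R_N = \overline{R_N} = V$.

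Next I would run the inductive step verbatim from the proof of Theorem~\ref{4.1}. For $0 \le n < N$ the ring $R_n$ is not a DVR, so $\m_n$ is a regular reflexive trace ideal of $R_n$ (\cite[Lemma 3.15]{GMP}), and the hypotheses of Setting~\ref{3.1} hold for the base ring $R_n$ with $I = \m_n$. As $\m_n$ is stable, Theorem~\ref{main}(2) yields
$$
\OCM(R_{n+1}) = \OCM(R_n, \m_n) = \OCM_{\calP}(R_n)
$$
as subcategories of $\mod\, R_n$, the last equality by \cite[Proposition 2.8 (iii)]{L}. Splitting off a free summand when one is present and using Lemma~\ref{2.2}(3) to pass indecomposability between $R_n$ and $R_{n+1}$, one obtains, exactly as in the proof of Theorem~\ref{4.1},
$$
\indOCM(R_n) = \indOCM(R_{n+1}) \ \dot\cup \ \{[R_n]\}.
$$
Iterating over $n = 0, 1, \ldots, N-1$ and using $R_N = V$, so $\indOCM(V) = \indCM(V) = \{[V]\}$, gives $\indOCM(R) = \{[R], [R_1], \ldots, [R_{N-1}], [V]\}$, a finite set. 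Since every object of $\OCM(R)$ has an indecomposable decomposition (Lemma~\ref{5.2}) and $\OCM(R)$ is closed under direct summands, $\OCM(R) = \mathrm{add}_R M$ for $M$ the direct sum of representatives of $\indOCM(R)$, i.e.\ $\OCM(R)$ is of finite type.

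The step needing the most care is the second paragraph: transporting the Arf property up the tower so that every $\m_n$ is stable, and verifying that the tower is finite and terminates precisely at $V$. Once these purely structural facts are in hand, the homological content is entirely supplied by Theorem~\ref{main}(2) together with the bookkeeping already carried out for Theorem~\ref{4.1}.
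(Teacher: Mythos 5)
Your proposal is correct and follows essentially the same route as the paper: the paper phrases the argument as an induction on $\ell_R(\overline{R}/R)$, blowing up at the Jacobson radical and using that a local Arf ring has minimal multiplicity (so $R_1=\m:\m$ and $\OCM(R_1)=\OCM_{\calP}(R)$) to peel off $\{[R]\}$ at each stage, which is exactly your unrolled tower $R=R_0\subsetneq R_1\subsetneq\cdots\subsetneq R_N=\overline{R}$. The only cosmetic differences are that you invoke Theorem \ref{main}(2) directly (the paper cites Kobayashi's theorem, which it generalizes) and justify stability of $\m_n$ via the trace-ideal remark rather than Lipman's minimal-multiplicity theorem; both are valid.
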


%\textcolor{red}{$\overline{R}$がlocal ringでないときも正しいのか確認する}

\begin{proof}
Let $\ell = \ell_R(\overline{R}/R) < \infty$. 
For each integer $n \ge 0$, we consider
$$
R_n = 
\begin{cases}
\  R &  (n = 0) \\
\  R_{n-1}^{J(R_{n-1})}  & (n \geq 1)
\end{cases}
$$
where $J(R_{n-1})$ stands for the Jacobson radical of the ring $R_{n-1}$. Since $\overline{R}$ is a local ring, so is the ring $R_n$ for all $n \ge 0$. Hence, by \cite[Theorem 2.2]{Lipman}, $v(R_n) = \e(R_n)$ for all $n \ge 0$. In particular, $R_1 = \m :\m$. If $\ell = 0$, then $R$ is a DVR. This shows the set $\indOCM(R) = \indCM(R) =\{[R]\}$ is finite. We assume $\ell > 0$ and the assertion holds for $\ell - 1$. Note that $R \ne R_1$ (\cite[Lemma 3.15]{GMP}). Since $R_1$ is an Arf ring and $\ell_{R_1}(\overline{R}/R_1) < \infty$, the hypothesis of induction on $\ell$ shows $\indOCM(R_1)$ is a finite set. Hence, because $\OCM(R_1) = \OCM_{\calP}(R)$ (\cite[Theorem 1.3 (2)]{K}), we conclude that $\indOCM(R) = \{[R]\} \ \dot\cup \ \indOCM(R_1)$. Consequently, $\indOCM(R)$ is finite. 
\end{proof}

%\textcolor{red}{$R$が解析的不分岐であっても, Arfでなければ, $\indOCM(R)$は有限とは限らない。$R=k[[t^3, t^7]]$を考えれば良い。}

\begin{rem}
Even though $R$ is analytically unramified, $\OCM(R)$ is not necessarily of finite type unless $R$ is an Arf ring. Indeed, let $R = k[[t^3, t^7]]$ be the semigroup ring over a field $k$. As $R$ does not have minimal multiplicity, the ring is not  Arf. However, by Lemma \ref{4.8}, we have $|\indOCM(R)|= |\indCM(R)|=\infty$. In addition, the converse of Corollary \ref{6.4} does not hold in general. The ring $R=k[[t^3, t^4]]$ is not an Arf ring, but $|\indOCM(R)| = |\indCM(R)|<\infty$. 
\end{rem}

%\textcolor{red}{(stable) categoryの次元についても考える。}

%\textcolor{red}{イデアル化やファイバー積の$\indOCM$は何か？}

As a consequence of Corollary \ref{6.4}, there are only finitely many Ulrich ideals in Arf local rings once the normalization is a local ring. 
%%%%%%%%%%%%%%%%%%%%%%%%%%%%%%%%%%%%%%%%%%%
%%%%%%%%%%%%%%%%%%%%%%%%%%%%%%%%%%%%%%%%%%%%%%%%%%%%%%%%%%%%%%%%%%%%%%%%%%%%%%%%%%%%%%%%%%%%%%%%%%%%%%%%%%%%%%%%%%%%%%%%%%%%%%%%%%%%%%%%%%%%%%%%%%%%%%%%%%%%%%%%%%%%%%%%%%%%%%%%%%%%%%%%%%%%%%%%%%%%%%%%%%%%%%%%%%%%%%%%%%%%

\begin{sdc} 
Competing Interests:  The authors have no competing interests to declare that are relevant to the content of this article.
\end{sdc}

\begin{ac}
The authors are grateful to Souvik Dey for suggesting {\it Remark} \ref{3.7rem} as well as his comments on a first draft of this paper.
\end{ac}

\end{document}